\newtheorem{thm}{Theorem}
\newtheorem{defn}{Definition}
\newtheorem{pro}{Proposition}
\newtheorem{lemma}{Lemma}
\newtheorem{remark}{Remark}
\newtheorem{corollary}{Corollary}
\newtheorem{example}{Example}
\newcommand{\R}{\mathbb{R}}
\newcommand{\supp}{\mathrm{supp}}
\begin{document}

\title{Khintchine inequality on normed spaces and the application to Banach-Mazur distance}
\author{Xin Luo\footnote{Academy of Mathematics and Systems Science, Chinese Academy of Sciences, Beijing 100190, P.R. China. \;\;\;\;\;\;\;~~~\;\;\;\;\;\;\; Email addresses: xinluo@amss.ac.cn \; and \;  xinlnew@163.com (Xin Luo).}%\footnotemark[1]
\and Dong Zhang\footnote{ Max Planck Institute for Mathematics in the Sciences, Inselstr. 22, 04103 Leipzig, Germany. \;\;\;\;\;\;\;~~~\;\;\;\;\;\;\;\;\;\;\;\;\;\;\;\;\;\;\;\;\; \;\;\;\;\;\;\;\;\; Email addresses:  13699289001@163.com \; and \;  dzhang@mis.mpg.de (Dong Zhang).} %\footnotemark[2]
}
%\renewcommand{\thefootnote}{\fnsymbol{footnote}}
%\thanks{Academy of Mathematics and Systems Science, Chinese Academy of Sciences, Beijing 100190, P.R. China. }
%\footnotetext[1]{ Academy of Mathematics and Systems Science, Chinese Academy of Sciences, Beijing 100190, P.R. China. \\
%Email address: {\tt xinluo@amss.ac.cn }\; or \; {\tt xinlnew@163.com} (Xin Luo).}
%\footnotetext[2]{LMAM and School of Mathematical Sciences, Peking University, Beijing 100871, P.R. China.\\
%Email addresses:
%{\tt dongzhang@pku.edu.cn} \; or \; {\tt 13699289001@163.com} (Dong Zhang).
%}
\date{}

\maketitle

\begin{abstract}
We establish variant Khintchine inequalities on normed spaces of Hanner type and cotype,
in which the Rademacher distribution corresponding to classical Khintchine inequality is replaced by general symmetric distributions. %which unifies many recent works on variant Khintchine inequalities.
The proof involves the $p$-barycenter % by means of
and Birkhoff's ergodic theorem.  More importantly, by employing these Khintchine inequalities, we get some
%an application to
 lower bounds %estimate
for Banach-Mazur distance between $l^p$-ball and a general centrally symmetric convex body. %two symmetric convex bodies. %is provided.

\vspace{0.2cm}

\textbf{Keywords}: Khintchine inequality, Banach-Mazur distance, Hanner type (cotype), $p$-barycenter, Birkhoff's ergodic theorem
% \PACS{PACS code1 \and PACS code2 \and more}
% \subclass{MSC code1 \and MSC code2 \and more}
\end{abstract}

\section{Introduction}
\label{intro}
 The classical Khintchine inequality states that for any $0< p <\infty$, there are constants $A_p$ and $B_p$ such that
$$
 A_p(\sum^n_{i=1}v^2_i)^\frac12\leq \left(E|\sum^n_{i=1}\epsilon_iv_i|^p\right)^\frac1p\leq B_p(\sum^n_{i=1}v^2_i)^\frac12
$$
 where $(v_1,\cdots,v_n)\in \R^n$, and $\{\epsilon_i\}^n_{i=1}$ is a sequence of independent random variables in Rademacher distribution.
 Khintchine inequality attracts much  %many researchers'
 attentions from various fields, particularly in probability, functional analysis and combinatorics (see
\cite{M17,PassSpektor18,AstashkinCurbera15,Astashkin14,Konig14,Alanis18,AstashkinCurbera14,Astashkin17}).
% There are variant studies on Khintchine inequality,
We specially refer the readers to the systematical works by Astashkin \cite{AstashkinCurbera15,Astashkin14,AstashkinCurbera14,Astashkin17,ASV00}.

The aims %work
of this paper %aim/purpose of this work
are two-fold: (I) extend Khintchine inequality to general normed spaces; (II) find more connections to other fields. %, and more applications to other problems.

For (I), we introduce a large family of normed spaces, namely, the Hanner type spaces (see Definition \ref{defn}), and then establish variant Khintchine inequalities on such spaces (see Theorem \ref{Khint-norm1}). %Roughly speaking,
Precisely, we replace the Rademacher random variables $\{\epsilon_i\}^n_{i=1}$ by general symmetric random variables, and we extend the real numbers $\{v_i\}^n_{i=1}$ to vectors in normed spaces with certain properties. The constants appearing in the upper and lower bounds are sharp in some special situations. It is worth noting that our proof involves the theory of barycenter and Birkhoff's theorem for measure preserving transformations.

%Instead of considering the space of real numbers, this paper is devoted to establishing Khintchine inequalities for larger family of normed spaces  of Hanner type and cotype. Moreover, compared with studying Rademacher functions, our results are applicable to any $L^\infty$ function with odd distribution. At the same time, we obtain two comparable norms and the estimates are well enough. It is worth noting that our proof involves the theory of barycenter and Birkhoff's theorem for measure-preserving transformations.

%It will be shown that our results contain as special cases the inequalities asserted by almost all old and recent theorems.

For (II), we find that %an application of
the generalized Khintchine inequalities in Theorem \ref{Khint-norm1} %our results
can be applied to the estimate of Banach-Mazur distance.
%More importantly,
%The multiplicative Banach-Mazur distance of two symmetric convex bodies are investigated in virtue of our improved Khintchine inequality.
For two centrally symmetric convex bodies $K$ and $L$ in $\R^n$, the (multiplicative) Banach-Mazur distance of $K$ and $L$ is defined by
$$d(K,L):=\inf\{r\ge 1: L\subset T(K)\subset rL,T\in GL(\R^n)\},$$
in which $GL(\R^n)$ is %represents
the general linear group on $\R^n$. %The Banach-Mazur distance of two norms can be defined in an equivalent way. %Also, the equivalent definition for norms.
Banach-Mazur distance is an important topic in the fields of %both
convex geometry and functional analysis.
There are several results on the Banach-Mazur distance of special convex bodies, such as cubes, balls and crosspolytopes \cite{FeiXue18,Kobos20}.  However, it is difficult to estimate the Banach-Mazur distance $d(K,L)$ in general, even though for some simple cases. % that $K$ and $L$ are $l^p$ and $l^q$ balls respectively.
As an instance, Daws, Johnson, Serre and Thurston discussed this problem in MathOverflow in 2010 \cite{mathoverflow}.
%By the extended Khintchine inequality in this paper,
In the present paper, as shown in Theorems \ref{thm:2}, %and \ref{thm:3},
we provide several lower bounds for the Banach-Mazur distance $d(l^\infty,\|\cdot\|)$ between the $l^\infty$-norm and a general norm $\|\cdot\|$.   %two approaches to estimate Banach-Mazur distance
%by means of
The proof heavily depends on our extended Khintchine inequality (Theorem \ref{Khint-norm1}). %in general,
We further show a lower bound estimation for $d(l^p,\|\cdot\|)$ (see Proposition \ref{pro:lp-general}). As a corollary, the
lower bound of $d(l^p,l^q)$ with $1\le p<2<q\le \infty$ is obtained, which partially answers the questions in MathOverflow \cite{mathoverflow}. As an example, we %as well as
give an answer to a question on the case of $n$-dimensional cubes and crosspolytopes \cite{FeiXue18}. %Actually, the Banach-Mazur distance between the

This paper is organised as follows. In Section \ref{sec:K-H}, we introduce the Hanner type and cotype for normed spaces, and propose a Khintchine inequality on such %kind of
normed spaces (see Theorem \ref{Khint-norm1}).  %of Hanner type and cotype.  %, and show
The proof of Theorem \ref{Khint-norm1} is given in Subsection \ref{subsec:proof}. While some preparatory works are shown in Subsection \ref{subsec:Lemmas} which may possess some independent interests.  In Section \ref{sec:BM}, we use our extended Khintchine inequality to get some lower bounds for Banach-Mazur distance (see Theorems \ref{thm:2}). %Some elementary or similar proofs as well as additional remarks and discussions are
An elementary but technical proof of an auxiliary sharp inequality (see Lemma \ref{lemma:P(n,k)}) is presented in Appendix.
\section{Khintchine-type inequalities on normed spaces} %of Hanner type and cotype
\label{sec:K-H}

Khinchine inequality is widely known in probability, and it is also frequently used in functional analysis.
In this section, we enlarge the scope of Khinchine inequality from the perspective of functional analysis.

\vspace{0.1cm}

\noindent \textbf{Basic setting}: Given a probability measure space $(X,\mu)$ and a normed space $(\mathbb{E},\|\cdot\|)$, for $f\in L^\infty(X,\mu)$ and $v=(v_1,\cdots,v_n)\in \mathbb{E}^n$,
we introduce the functional %define %$I_p(v,\cdot)$ be a function acting on $L^\infty(X,\mu)$ defined by
\begin{equation}\label{eq:Ipvf}
I_p(v,f)=\left(\int_{X^n}\left\|\sum_{i=1}^n f(x_i)v_{i}\right\|^pdx_1\cdots dx_n\right)^{\frac1p}
\end{equation}
where %$\mathbb{E}$ is a linear space, $\|\cdot\|$ is a norm on $\mathbb{E}$,
$n\in \mathbb{Z}_+$  and $p\ge1$. %assume that $f: X\rightarrow  \mathbb{R}$ is a measurable function such that there exists a measurable involution $-$ $: X\rightarrow X$ satisfying $f(-x)=-f(x)$ for $x\in X$ a.e., and $\mu(A)=\mu(-A)$ for any measurable subset $A \subset X$.
%{\color{red}Let $X$ be a topological symmetric space and $\mu$ be the probability measure on $X$}.
%For $f\in L^\infty(X,\mu)$ and $v=(v_1,\cdots,v_n)\in \mathbb{E}^n$,
%we introduce the functional %define %$I_p(v,\cdot)$ be a function acting on $L^\infty(X,\mu)$ defined by
%\begin{equation}\label{eq:Ipvf}
%I_p(v,f)=\left(\int_{X^n}\left\|\sum_{i=1}^n f(x_i)v_{i}\right\|^pdx_1\cdots dx_n\right)^{\frac1p}
%\end{equation}
%where $\mathbb{E}$ is a linear space, $\|\cdot\|$ is a norm on $\mathbb{E}$, $n\in \mathbb{N}^+$  and $p\ge1$. %the properties of $I_p(\cdot,\cdot)$ and prove the enhanced Khintchine inequalities as follows.

We further assume that there is a measurable involution $-$ $: X\rightarrow X$ satisfying $f(-x)=-f(x)$ for $x\in X$ a.e., and $\mu(A)=\mu(-A)$ for any measurable subset $A \subset X$.  In this setting, we call such $f$ an {\sl odd function} on $X$ (i.e., its distribution satisfies $\mu(f>c)=\mu(f<-c)$ for any $c\ge0$).

For example, $X$ can be chosen as a centrally symmetric set or an axially symmetric set
endowed with a probability measure,  and $f$ can be an odd function on $X$ in the classical sense. In fact, without loss of generality, %since $L^p(X)$ can be isometric embedded into $L^p[-\frac12,\frac12]$,
we can assume $X=[-\frac12,\frac12]$ in this section.

%\begin{remark}
%In the basic setting, we call such $f$ an {\sl odd function} on $X$. For example, $X$ can be chosen as a centrally symmetric set %space \footnote{For example, $X$ can be a centrally symmetric set in Euclidean spaces.}
%or an axially symmetric set %in linear spaces
%endowed with a probability measure, %space, %w.r.t. an original point,
% and $f$ can be an odd function on $X$ in the classical sense.  %In general, we call such $f$ an odd function on $X$.
%
%%Since the basic setting is rather weak, there are many other choices for $X$, such as axially symmetric spaces and rotationally symmetric spaces.
%\end{remark}
%Here, centrally symmetric space is one of the choices for $X$, but since the condition in the present paper is much weaker, there are many other choices for $X$, such as axially symmetric space and rotationally symmetric space and so on.

\begin{defn} \label{defn}
Given $p>0$, $n\in\mathbb{Z}_+$ and a normed space $\mathbb{E}:=(\mathbb{E},\|\cdot\|)$, we say $\mathbb{E}$ (or $\|\cdot\|$) is of Hanner cotype $(p,n)$, if
for any $x_1,\cdots, x_n\in \mathbb{E}$, there is
$$
\sum_{(\epsilon_{1},\cdots,\epsilon_{n})\in \{-1,1\}^n}\left\|\sum^{n}_{i=1}\epsilon_i x_i\right\|^p\geq \sum_{(\epsilon_{1},\cdots,\epsilon_{n})\in \{-1,1\}^n}\left|\sum^{n}_{i=1}\epsilon_i \|x_i\|\right|^p.
$$
Similarly, we say $\mathbb{E}$ is of Hanner type $(p,n)$, if %for any $x_1,\cdots, x_n$,
the above inequality is reversed.
\end{defn}

\begin{remark}
In \cite{KOT96,YTK06}, a normed space is said to be of Hanner cotype (resp., Hanner type) $p$, if it is  Hanner cotype (resp., Hanner type) $(p,n)$ for all $n\in\mathbb{Z}_+$.
%As shown in \cite{KOT96,YTK06},
It is known that  $L^p$ space is of Hanner cotype $p$ for $1\le p\le 2$, and of Hanner type $p$ for $p\ge 2$.

Moreover, a norm satisfying the Hlawka inequality
$$\|x\|+\|y\|+\|z\|+\|x+y+z\|\ge \|x+y\|+\|y+z\|+\|z+x\|$$
is of Hanner cotype $(1,3)$. Thus, besides $L^1$ norm, there are many other norms (like the norm induced by the support function of a zonoid) is of Hanner cotype $(1,3)$.
 Furthermore, by Witsenhausen's result \cite{W73}, we know that a hypermetric normed space is of Hanner cotype $(1,n)$ for any $n\in\mathbb{Z}_+$.
\end{remark}

In this paper, we use %Here
$A_p$ and $B_p$ to denote %are respectively
the minimum and maximum of the
set $$\left\{1, 2^{\frac 12-\frac 1p}, 2^{\frac 12}(\Gamma(\frac{p+1}{2})/\sqrt{\pi})^\frac 1p\right\},$$ in which $\Gamma(\cdot)$ is %means
the standard gamma function.

We establish the generalized %enhanced
Khintchine inequalities on $I_p(\cdot,\cdot)$ as follows.

\begin{thm}\label{Khint-norm1}%and a nonzero $L^\infty$ function $f:X\to \R$ who possesses an odd distribution (i.e., $\mu\{x|f(x)>c\}=\mu\{x|f(x)<-c\}$, $\forall c \in \mathbb{R}$),
%Under the above basic setting, for a Hanner type or cotype normed space $\mathbb{E}$,
Given a normed space $\mathbb{E}$, and an odd function $f\in L^{\infty}(X)$,  we have the following:

\noindent (1) If $\mathbb{E}$ is of Hanner cotype $(q,n)$ with $q\leq p$, then for any $v\in\mathbb{E}^n$, $$ I_p(v,f)\geq c_{f,p,q}\sqrt{\sum^n_{i=1}\|v_{i}\|^2}$$ %for some positive constant
 where  $c_{f,p,q}= A_q \sup\limits_{S\subset \{f>0\}}\min\{(2\mu(S))^{\frac1p-1},(2\mu(S))^{-\frac12}\}\|f|_{S\cup (-S)}\|_1$.
% $c_{f,p,q}$ is the supreme of $\sup\limits_{S\subset \{f>0\}}A_q\min\{(2\mu(S))^{\frac1p-1},(2\mu(S))^{-\frac12}\}\mathop{\mathrm{essinf}}\limits_{x\in S}f(x)$ and
% $A_q\min\{(2\mu(S))^{\frac1p-1},(2\mu(S))^{-\frac12}\}\|f|_{S\cup (-S)}\|_1$ over all $S\subset \{f>0\}$ that admits an ergodic transformation $T:S\to S$.

\noindent (2) If $\mathbb{E}$ is of Hanner type $(q,n)$ with $q\ge p$, then
 $$I_p(v,f)\leq C_{f,p,q}\sqrt{\sum^n_{i=1}\|v_{i}\|^2}$$  for any $v\in\mathbb{E}^n$,  %for some positive constant
where $C_{f,p,q}= B_q\max\{\mu (\mathrm{supp}(f))^{\frac1p},\mu (\mathrm{supp}(f))^{\frac12}\}\|f\|_\infty.$ %where $c_{f,p,q}$ and $C_{f,p,q}$ depend on $p,q$ and $f$.

\noindent (3) If $\mathbb{E}$ is an $L^2$-space, then for any $v\in l^2(\mathbb{E})$,
 $$c_{f,p}\|v\|_2 \le I_p(v,f)\le C_{f,p} \|v\|_2$$
 where $c_{f,p}= A_p\max\{\mu (\mathrm{supp}(f))^{\frac1p-1},\mu (\mathrm{supp}(f))^{-\frac12}\}\|f\|_1$ and $C_{f,p}=B_p\max\{\mu (\mathrm{supp}(f))^{\frac1p},\mu (\mathrm{supp}(f))^{\frac12}\}\|f\|_\infty$.
% \begin{align*}
% &c_{f,p}= A_p\max\{\mu (\mathrm{supp}(f))^{\frac1p-1},\mu (\mathrm{supp}(f))^{-\frac12}\}\|f\|_1,
% \\&C_{f,p}=B_p\max\{\mu (\mathrm{supp}(f))^{\frac1p},\mu (\mathrm{supp}(f))^{\frac12}\}\|f\|_\infty.
% \end{align*}
 %there exists positive constants $c_{f,p}$ and $C_{f,p}$ such that  $c_{f,p}\|v\|_2\le I_p(v,f)\le C_{f,p}\|v\|_2$,
%$$C_{f,p}\le B_p\max\{\mu (\mathrm{supp}(f))^{\frac1p},\mu (\mathrm{supp}(f))^{\frac12}\}\|f\|_\infty.$$

%For any nonzero odd function $f\in L^\infty(X,\mu)$,
%(1) if $\mathbb{E}$ is of Hanner cotype $q$ with $q\leq p$, then $ I_p(v,f)\geq c_{f,p,q}\left(\sum^n_{i=1}\|v_{i}\|^2\right)^\frac12$ for some positive constant $c_{f,p,q}$;
%(2) if $\mathbb{E}$ is of Hanner type $q$ with $q\ge p$, then
%$I_p(v,f)\leq C_{f,p,q}\left(\sum^n_{i=1}\|v_{i}\|^2\right)^\frac 12$  for some positive constant $C_{f,p,q}$,
%where $c_{f,p,q}$ and $C_{f,p,q}$ depend on $p,q$ and $f$.
\end{thm}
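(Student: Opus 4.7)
My plan, for all three parts, is a four-step reduction: (i) symmetrize via the involution on $X$ to introduce Rademacher signs $\epsilon\in\{\pm1\}^n$, (ii) apply the Hanner (co)type hypothesis pointwise in $x=(x_1,\dots,x_n)$ to pass from vectors to scalars, (iii) apply the classical sharp scalar Khintchine inequality with constants $A_q,B_q$, and (iv) bound the resulting scalar integral $J:=\int_{X^n}(\sum_i f(x_i)^2\|v_i\|^2)^{p/2}\,dx$ in terms of $\sum_i\|v_i\|^2$. Step (i) works because the coordinatewise involution is measure-preserving and $f(\epsilon_i x_i)=\epsilon_i f(x_i)$ by oddness, so averaging over $\epsilon$ rewrites
\[
I_p(v,f)^p=\int_{X^n}\frac{1}{2^n}\sum_{\epsilon\in\{\pm1\}^n}\Big\|\sum_i \epsilon_i f(x_i)v_i\Big\|^p\,dx.
\]

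\textbf{Pointwise reduction, and the three parts.} For part (1), fix $x$ and chain: the power-mean inequality on $\{\pm1\}^n$ (valid since $q\leq p$) to lower the inner exponent from $p$ to $q$; Hanner cotype $(q,n)$ applied to $\{f(x_i)v_i\}$, which passes from vectors to scalars; and the classical sharp Khintchine inequality at exponent $q$ producing $A_q$. This yields $I_p(v,f)\geq A_q\,J^{1/p}$. Part (2) is dual: Hanner type $(q,n)$ with $q\geq p$, the reverse power-mean, and the upper scalar Khintchine constant $B_q$ give $I_p(v,f)\leq B_q\,J^{1/p}$. For part (3), the Hanner inequality at $q=2$ holds with \emph{equality} in any $L^2$-space, so the vector-to-scalar reduction is lossless; however, to obtain the constants $A_p,B_p$ rather than $A_2,B_2$ one instead invokes the sharp Kahane-Khintchine inequality for Hilbert-valued Rademacher sums, which yields, pointwise in $x$,
\[
A_p\Big(\sum_i f(x_i)^2\|v_i\|^2\Big)^{1/2}\leq \Big(\frac{1}{2^n}\sum_\epsilon\Big\|\sum_i \epsilon_i f(x_i)v_i\Big\|^p\Big)^{1/p}\leq B_p\Big(\sum_i f(x_i)^2\|v_i\|^2\Big)^{1/2}.
\]

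\textbf{Scalar integral and main obstacle.} All three parts thereby reduce to sharp two-sided bounds on $J$ in terms of the stated constants. This is the principal difficulty. The behaviour of $t\mapsto t^{p/2}$ is concave for $p\leq 2$ and convex for $p\geq 2$, which is exactly the reason that the minimum (resp. maximum) of two powers of $\mu(S)$ appears in the definitions of $c_{f,p,q}$ (resp. $C_{f,p,q}$); the two regimes must be handled separately. For the lower bound I would restrict the integration to $(S\cup(-S))^n$ for an arbitrary $S\subset\{f>0\}$, apply the involution a second time so that the signs $\mathrm{sgn}\,f(x_i)$ become i.i.d.\ Bernoulli, and replace $|f|$ on $S\cup(-S)$ by its normalised $L^1$-average $\|f|_{S\cup(-S)}\|_1/(2\mu(S))$. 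This reduces the problem to a sharp combinatorial inequality (Lemma \ref{lemma:P(n,k)} in the Appendix) estimating $\mathbb{E}\big[(\sum_{i\in I}\|v_i\|^2)^{p/2}\big]$ for a random subset $I\subset\{1,\dots,n\}$ of density $2\mu(S)$; the $p$-barycenter identifies the extremal $\{v_i\}$ that pins down the sharp constant, and Birkhoff's ergodic theorem is the device used in the lemma to pass from arbitrary finite ensembles to a limiting extremal configuration. The upper bound for parts (2) and (3) is obtained analogously using $|f|\leq\|f\|_\infty\mathbf{1}_{\mathrm{supp}(f)}$ and the dual combinatorial estimate.
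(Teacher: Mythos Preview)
Your steps (i)--(iii) are correct and match exactly what the paper carries out inside Lemma~\ref{hannerAB-P}, except that you run the symmetrization--Hanner--Khintchine chain for a general odd $f$ while the paper does it only for $f=1_S-1_{-S}$. Either order is legitimate, and your pointwise inequality $I_p(v,f)^p\ge A_q^p\,J$ (and its dual for part~(2)) is valid.

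The genuine gap is in step~(iv). You assert that one may ``replace $|f|$ on $S\cup(-S)$ by its normalised $L^1$-average'' and then claim that Birkhoff's ergodic theorem and the $p$-barycenter are devices used \emph{inside} Lemma~\ref{lemma:P(n,k)}. This is backwards. Lemma~\ref{lemma:P(n,k)} is proved in the Appendix by elementary induction and the power-mean inequality; it contains no ergodic theory and no barycenters, and its sharp constants are witnessed by the explicit configurations $(1,0,\dots,0)$ and $(1,\dots,1)$. Birkhoff's theorem is precisely the mechanism that justifies the replacement step you left unargued. In the paper this is done at the level of $I_p(v,\cdot)$: Lemma~\ref{lem:main} establishes that $I_p(v,\cdot)$ is convex and invariant under $G_{odd}$, and Lemma~\ref{sharped} then takes an ergodic $T\in G_{odd}$ fixing $S$ so that the Ces\`aro averages $\frac1m\sum_{i=0}^{m-1}f|_{S\cup(-S)}\circ T^i$ converge a.e.\ to $\frac{\|f|_{S\cup(-S)}\|_1}{2\mu(S)}(1_S-1_{-S})$; convexity plus invariance (Lemma~\ref{sharpbyconvex}) give $I_p(v,f)\ge I_p(v,h_S)$, after which Lemmas~\ref{hannerAB-P} and~\ref{lemma:P(n,k)} finish. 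The $p$-barycenter appears only in Claim~4 of Lemma~\ref{lem:main}, where it selects simple approximants $f_\epsilon,h_\epsilon$ that respect a pointwise ordering; it has nothing to do with identifying extremal $\{v_i\}$. In short: your outline is sound, but the passage from general $f$ to an indicator --- the one step that genuinely needs Birkhoff --- is exactly the step you asserted without proof, and the tools you assigned to Lemma~\ref{lemma:P(n,k)} belong elsewhere in the argument.
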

%\begin{thm}\label{th:main-Khint1}
%For any nonzero odd function $f\in L^{\infty}(X,\mu)$, there exists positive constants $c_{f,p}$ and $C_{f,p}$ such that
%$$c_{f,p}\|v\|_2\le I_p(v,f)\le C_{f,p}\|v\|_2$$
%for any $v=(v_1,\cdots,v_n)\in \mathbb{R}^n$ and $n\in \mathbb{N}^{+}$.
%\end{thm}
\begin{remark}
In the case of the real line $\R$, Astashkin showed a better estimate than Theorem \ref{Khint-norm1} (3) (see \cite{ASV00}).
\end{remark}
\begin{example}
 %For example, taking
 Let $(X,\mu)$ and $f$ be defined by $X=\{-1,1\}$, $\mu\{-1\}=\mu\{1\}=\frac12$, $f(-1)=-1$ and $f(1)=1$. Then Theorem \ref{Khint-norm1} implies $\forall v\in l^2(\R)$,
  $$A_p\|v\|_2\le I_p(v,f)=\left(\frac{1}{2^n}\left|\sum_{i=1}^n \epsilon_iv_{i}\right|^p\right)^{\frac1p}\le B_p\|v\|_2$$
which is nothing but the classical Khintchine inequality.
\end{example}
%In Subsection 2, we will provide the constants in Theorem \ref{Khint-norm1} %and Theorem \ref{th:main-Khint1}
%concretely. Moreover,
 We prove that $I_p(v,f)$ essentially %actually
 defines a norm of $v$ on $\mathbb{E}^n$ if $f$ is nonconstant, and also defines a norm of $f$ on $L^p(X,\mu)$ if $\sum_{i=1}^n v_i\ne 0$. Thus, we may call $I_p(\cdot,\cdot)$ a bi-norm form. To some extent, Theorem \ref{Khint-norm1} %and Theorem \ref{th:main-Khint1}
 not only enlarges the scope of Khinchine inequality, but also derives two comparable norms, $\|v\|_{p,f}:=I_p(v,f)$
and $\|v\|_{l^2(\mathbb{E})}:=\sqrt{\sum^n_{i=1}\|v_i\|^2}$. %Besides, the constants in the estimate are good enough which could be seen in Section \ref{sec:BM}. %also reflected through the application to the estimate of Banach-Mazur distance (see Section \ref{sec:BM}).

% For one-column wide figures use
\subsection{Auxiliary Lemmas }
\label{subsec:Lemmas}

\label{sec:proof}
\begin{lemma}\label{lemma:P(n,k)}
Given $n,k\in \mathbb{Z}_+$, $k\le n$, $\alpha\ge 0$, then for any $x_i\ge0$, $i=1,2,\cdots,n$,
there is
\begin{equation}\label{eq:lemma-main}
\max\left\{\frac kn,\left(\frac kn\right)^{\alpha}\right\}  \ge \frac{\sum_{1\le i_1<\cdots<i_k\le n}\left(\sum_{j=1}^k x_{i_j}\right)^\alpha}{{n\choose k}\left(\sum_{i=1}^n x_i\right)^\alpha} \ge \min\left\{\frac kn,\left(\frac kn\right)^{\alpha}\right\}
\end{equation}
%\begin{equation}\label{eq:lemma-main}
%\max\left\{\frac kn,\left(\frac kn\right)^{\alpha}\right\} {n\choose k}\left(\sum_{i=1}^n x_i\right)^\alpha\ge \sum_{1\le i_1<\cdots<i_k\le n}\left(\sum_{j=1}^k x_{i_j}\right)^\alpha \ge \min\left\{\frac kn,\left(\frac kn\right)^{\alpha}\right\} {n\choose k}\left(\sum_{i=1}^n x_i\right)^\alpha
%\end{equation}
in which both the upper and lower bounds are sharp. %the best.
Here $n\choose k$ appearing in \eqref{eq:lemma-main} is the combinatorial number representing $k$-combination of $n$-elements.
\end{lemma}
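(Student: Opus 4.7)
The plan is to reduce the claim to two elementary estimates that together pinch the averaged sum between $k/n$ and $(k/n)^{\alpha}$, and then to split cases according to whether $\alpha\ge 1$ or $0\le\alpha\le 1$ (the boundary value $\alpha=1$ giving equality $R=k/n$, and $\alpha=0$ giving $R=1$, both handled directly). Write $S=\sum_{i=1}^n x_i$, assume $S>0$ (else the claim is trivial/by continuity), and for each $k$-subset $I\subset\{1,\dots,n\}$ put $y_I=\sum_{i\in I}x_i\in[0,S]$. The whole argument rests on the double-counting identity
\[
\sum_{I}y_I=\binom{n-1}{k-1}S=\frac{k}{n}\binom{n}{k}S,
\]
obtained because each index $i$ lies in exactly $\binom{n-1}{k-1}$ of the $\binom{n}{k}$ subsets. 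Denote the averaged ratio in the lemma by $R$.

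For the first pair of bounds I would invoke Jensen's inequality applied to the uniform average of the $\binom{n}{k}$ numbers $y_I$: when $\alpha\ge 1$ the convexity of $t\mapsto t^\alpha$ gives $\frac{1}{\binom{n}{k}}\sum_I y_I^\alpha\ge\bigl(\frac{k}{n}S\bigr)^\alpha$, hence $R\ge (k/n)^\alpha$; when $0<\alpha\le 1$ concavity reverses this to $R\le (k/n)^\alpha$. For the second pair I would exploit the trivial estimate $y_I\le S$, rewriting $y_I^\alpha=y_I\cdot y_I^{\alpha-1}$: for $\alpha\ge 1$ this yields $y_I^\alpha\le y_I\,S^{\alpha-1}$, summing to $\sum_I y_I^\alpha\le \binom{n-1}{k-1}S^\alpha$ and thus $R\le k/n$; for $0<\alpha\le 1$ the direction flips because $y_I^{\alpha-1}\ge S^{\alpha-1}$, giving $R\ge k/n$. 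Putting these two pairs together produces exactly the asserted envelope $\min\{k/n,(k/n)^\alpha\}\le R\le\max\{k/n,(k/n)^\alpha\}$.

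Sharpness of both bounds follows by exhibiting two extremal configurations. Taking $x_1=\cdots=x_n=S/n$ makes every $y_I$ equal to $(k/n)S$, so $R=(k/n)^\alpha$ exactly, saturating the Jensen side. Taking $x_1=S$ and $x_2=\cdots=x_n=0$ leaves only the $\binom{n-1}{k-1}$ subsets containing the index $1$ contributing, each with value $S^\alpha$, and a short computation gives $R=\binom{n-1}{k-1}/\binom{n}{k}=k/n$, saturating the other side. Swapping which of these is the max/min as $\alpha$ crosses $1$ shows both inequalities in \eqref{eq:lemma-main} are attained for every admissible $\alpha$.

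The only mildly delicate points are purely bookkeeping: one must allow some $y_I$ to vanish (so the factor $y_I^{\alpha-1}$ is formally infinite for $\alpha<1$, but this causes no harm since such terms are multiplied by $y_I=0$ and contribute $0$), and one must verify the degenerate values $\alpha=0$ and $\alpha=1$ by hand, which both give $R$ in closed form. I do not foresee a real obstacle; the hardest part is simply assembling the four one-line estimates and then writing out the two extremal configurations cleanly enough to make the sharpness claim unambiguous.
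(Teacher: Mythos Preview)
Your argument is correct. Both you and the paper obtain the $(k/n)^{\alpha}$ side by Jensen (the paper calls it the power mean inequality), and both exhibit the same two extremal configurations $(1,0,\dots,0)$ and $(1,1,\dots,1)$ for sharpness.

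The genuine difference is in how the $k/n$ bound is obtained. The paper proves $\sum_I y_I^{\alpha}\ge\binom{n-1}{k-1}S^{\alpha}$ for $0\le\alpha\le1$ (and the reverse for $\alpha\ge1$) by a double induction on $(n,k)$: fixing $x_1,\dots,x_{n-1}$, it differentiates in $x_n$, checks monotonicity, and reduces to the inductive hypotheses $P(n-1,k-1)$ and $P(n-1,k)$ via the Pascal identity. Your route is shorter and more transparent: you simply write $y_I^{\alpha}=y_I\cdot y_I^{\alpha-1}$ and compare $y_I^{\alpha-1}$ with $S^{\alpha-1}$ using $0\le y_I\le S$, then sum and invoke the double-counting identity $\sum_I y_I=\binom{n-1}{k-1}S$. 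This avoids the induction entirely and makes the role of the constraint $y_I\le S$ explicit; the paper's induction, by contrast, never isolates that pointwise comparison and instead hides it inside the sign of $\beta'(x_n)$. Your handling of the boundary cases ($y_I=0$ with $\alpha<1$, and $\alpha\in\{0,1\}$) is adequate.
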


We present the technical proof of Lemma \ref{lemma:P(n,k)} in Appendix. Now, we concentrate on the properties of $I_p(v,f)$.

\begin{lemma}\label{hannerAB-P}
Given $S\subset X$ with $S\cap (-S)=\varnothing$, let $$f(x)=(1_S-1_{-S})(x):=\begin{cases}
1,&\text{ if } x\in S,\\
-1,&\text{ if } x\in -S,\\
0,&\text{ otherwise.} \\
\end{cases}$$ %and $\mu(S)=t\le \frac12$.

(1) If $\mathbb{E}$ is of Hanner cotype $(q,n)$ with $q\leq p$, then $$I_p(v,f)\geq A_q\min\{(2\mu(S))^\frac1p,(2\mu(S))^\frac12\}(\sum_{i=1}^n \|v_{i}\|^2)^{\frac{1}{2}}.$$

(2) If $\mathbb{E}$ is of Hanner type $(q,n)$ with $q\geq p$, then $$I_p(v,f)\leq B_q\max\{(2\mu(S))^\frac 1p,(2\mu(S))^\frac 12\}(\sum^n_{i=1}\|v_{i}\|^2)^\frac 12.$$

%in which $A_p$ and $B_p$ are respectively the minimum and maximum of the set $\{1, 2^{\frac 12-\frac 1p}, 2^{\frac 12}(\Gamma(\frac{p+1}{2})/\sqrt{\pi})^\frac 1p\}$. Here $\Gamma(\frac{p+1}{2})$ means the gamma function of $\frac{p+1}{2}$. %value of
\end{lemma}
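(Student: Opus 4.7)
The strategy is to decompose the integral defining $I_p(v,f)^p$ according to which coordinates land in the support $T := S\cup(-S)$, and then to combine the Hanner hypothesis with Lemma \ref{lemma:P(n,k)} and two applications of Jensen's inequality.

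Set $t=\mu(T)=2\mu(S)$. Since $f=\mathbf{1}_S-\mathbf{1}_{-S}$ and $\mu$ is symmetric under $-$, I would first partition $X^n$ according to the index set $A=\{i:x_i\in T\}$. Integrating the coordinates $i\notin A$ trivially gives a factor $(1-t)^{n-|A|}$ (as $f$ vanishes there), while on $T^A$ the value $f(x_i)$ is equidistributed on $\{-1,+1\}$ with mass $\mu(S)=t/2$ per sign pattern. This yields the exact expansion
\begin{equation*}
I_p(v,f)^p=\sum_{A\subseteq[n]}(1-t)^{n-|A|}\Bigl(\tfrac{t}{2}\Bigr)^{|A|}\sum_{\epsilon\in\{-1,1\}^{A}}\Bigl\|\sum_{i\in A}\epsilon_iv_i\Bigr\|^p.
\end{equation*}

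Next I would handle the inner sum. By padding with zero vectors, Hanner cotype (resp.\ type) $(q,n)$ implies the same property for every $k\le n$. Combined with the classical Khintchine inequality applied to the real numbers $\|v_i\|$, Hanner cotype $(q,k)$ gives $\sum_{\epsilon}\|\sum_{i\in A}\epsilon_iv_i\|^{q}\ge 2^{|A|}A_q^{q}(\sum_{i\in A}\|v_i\|^2)^{q/2}$; Jensen's inequality (power-mean, using $p\ge q$) then upgrades the exponent from $q$ to $p$, yielding
\begin{equation*}
\sum_{\epsilon\in\{-1,1\}^{A}}\Bigl\|\sum_{i\in A}\epsilon_iv_i\Bigr\|^p\ge 2^{|A|}A_q^{p}\Bigl(\sum_{i\in A}\|v_i\|^2\Bigr)^{p/2}.
\end{equation*}
The Hanner type case is completely symmetric, giving the reverse inequality with $B_q$ and $p\le q$.

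Substituting back, the factors of $2^{|A|}$ cancel the $(t/2)^{|A|}$ into $t^{|A|}$, and grouping $A$'s of a fixed size $k$ I would invoke Lemma \ref{lemma:P(n,k)} with $a_i=\|v_i\|^2$ and $\alpha=p/2$ to replace $\sum_{|A|=k}(\sum_{i\in A}a_i)^{p/2}$ by $\binom{n}{k}\min\{k/n,(k/n)^{p/2}\}(\sum_i a_i)^{p/2}$ in the lower bound, and by the corresponding $\max$-expression in the upper bound. What remains is a binomial average: letting $K\sim\mathrm{Bin}(n,t)$, I need
\begin{equation*}
\mathbb{E}\bigl[\min\{K/n,(K/n)^{p/2}\}\bigr]\ge\min\{t,t^{p/2}\},\qquad \mathbb{E}\bigl[\max\{K/n,(K/n)^{p/2}\}\bigr]\le\max\{t,t^{p/2}\}.
\end{equation*}
Splitting into $p\ge 2$ (where $(k/n)^{p/2}\le k/n$) and $p\le 2$ (where the reverse holds), one of the two terms is always linear in $K$ with expectation $t$, and the other is $(K/n)^{p/2}$ which is handled by Jensen on the convex/concave function $x\mapsto x^{p/2}$. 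Taking $p$-th roots converts $\min\{t,t^{p/2}\}$ and $\max\{t,t^{p/2}\}$ into the $\min$ and $\max$ of $(2\mu(S))^{1/p}$ and $(2\mu(S))^{1/2}$ as required.

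The main technical hurdle is the combinatorial step, i.e.\ the sharp inequality of Lemma \ref{lemma:P(n,k)}; everything else is bookkeeping around the expansion of $I_p(v,f)^p$ and two routine applications of Jensen (one to swap the $L^q$ and $L^p$ averages over sign patterns, and one to evaluate the binomial expectation). A minor subtlety worth highlighting explicitly in the write-up is the reduction of Hanner (co)type $(q,n)$ to $(q,k)$ for $k<n$ via padding, which ensures the Hanner hypothesis is available on each subset $A$.
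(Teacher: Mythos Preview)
Your proposal is correct and follows essentially the same route as the paper's proof: the same decomposition of $I_p(v,f)^p$ according to which coordinates land in $S\cup(-S)$, the same combination of the power-mean inequality, Hanner (co)type, and the classical Khintchine inequality to bound the inner Rademacher sums by $2^{|A|}A_q^p(\sum_{i\in A}\|v_i\|^2)^{p/2}$ (resp.\ the $B_q$ upper bound), the same appeal to Lemma~\ref{lemma:P(n,k)} with $\alpha=p/2$, and the same closing Jensen/binomial computation. Your explicit remark that padding with zero vectors reduces Hanner (co)type $(q,n)$ to $(q,k)$ for $k<n$ is a point the paper uses implicitly without comment.
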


\begin{proof}
Since $\mu$ is a propability measure on $X$, $S\subset X$ and $S\cap (-S)=\varnothing$, we have $\mu(S)\le \frac12$. Denote by $t=\mu(S)$.

Let $\{\epsilon_i\}^n_{i=1}$ be a sequence of independent Rademacher random variables in the  probability distribution $P(1)=P(-1)=\frac{1}{2}$. Given $1\leq k\leq n$, by the classical Khintchine inequality on Rademacher distributed variables (see \cite{Haagerup82}), there holds
$$
2^k A^p_p\left(\sum^k_{j=1}v^2_{i_j}\right)^\frac p2\leq \sum_{(\epsilon_{i_1},\cdots,\epsilon_{i_k})\in \{-1,1\}^k}\left|\sum^k_{j=1}\epsilon_{i_j}v_{i_j}\right|^p\leq
2^k B^p_p\left(\sum^k_{j=1}v^2_{i_j}\right)^\frac p2,
$$
where $v_i\in \mathbb{R}$, $A_p$ and $B_p$ are respectively the minimum and maximum of the
set $\{1, 2^{\frac 12-\frac 1p}, 2^{\frac 12}(\Gamma(\frac{p+1}{2})/\sqrt{\pi})^\frac 1p\}$.

Note that $\mu\{f=1\}=t=\mu\{f=-1\}$, $\mu\{f=0\}=1-2t$, and
\begin{equation}\label{eq:Ip(f)}
(I_p(v,f))^p=\sum_{k=1}^n t^k(1-2t)^{n-k}\sum_{1\le i_1<\cdots<i_k\le n}\sum_{(\epsilon_{i_1},\cdots,\epsilon_{i_k})\in \{-1,1\}^k}\left\|\sum_{j=1}^k \epsilon_{i_j}v_{i_j}\right\|^p.
\end{equation}
If $\mathbb{E}$ is of Hanner cotype $(q,n)$ with $q\leq  p$, there is

\[
\begin{split}
\sum_{(\epsilon_{i_1},\cdots,\epsilon_{i_k})\in \{-1,1\}^k}\left\|\sum_{j=1}^k \epsilon_{i_j}v_{i_j}\right\|^p
&\geq 2^{k-\frac {kp}{q}}\left(\sum_{(\epsilon_{i_1},\cdots,\epsilon_{i_k})\in \{-1,1\}^k}\left\|\sum_{j=1}^k \epsilon_{i_j}v_{i_j}\right\|^q\right)^\frac pq\\
&\geq  2^{k-\frac {kp}{q}}\left(\sum_{(\epsilon_{i_1},\cdots,\epsilon_{i_k})\in \{-1,1\}^k}\left|\sum_{j=1}^k \epsilon_{i_j}\|v_{i_j}\|\right|^q\right)^\frac pq\\
&\geq 2^{k-\frac {kp}{q}}\left(2^k A^q_q\left(\sum_{j=1}^k\|v_{i_j}\|^2\right)^{\frac q2}\right)^\frac pq = 2^k A_q^p\left(\sum_{j=1}^k\|v_{i_j}\|^2\right)^{\frac p2},
\end{split}
\]
where we used power mean inequality in the first inequality.
Hence,
\[
\begin{split}
&(I_p(v,f))^p\geq A_q^p\sum_{k=1}^n (2t)^k(1-2t)^{n-k}\sum_{1\le i_1<\cdots<i_k\le n}\left(\sum_{j=1}^k \|v_{i_j}\|^2\right)^\frac p2.
\end{split}
\]
By Lemma \ref{lemma:P(n,k)} and Jensen inequality, $I_p(v,f)$ is larger than or equal to %we  have
%Applying Jensen inequality to the following cases, then we obtain:
$$
\begin{cases}
A_q\left(\sum_{i=1}^n \|v_{i}\|^2\right)^\frac 12\left(\sum_{k=1}^n (2t)^k(1-2t)^{n-k} {n \choose k} \frac kn\right)^\frac 1p=(2t)^\frac 1p A_q\left(\sum_{i=1}^n \|v_{i}\|^2\right)^\frac 12,&\text{ if }1\leq p \leq 2,\\
A_q\left(\sum_{i=1}^n \|v_{i}\|^2\right)^\frac 12\left(\sum_{k=1}^n (2t)^k(1-2t)^{n-k} {n \choose k} (\frac kn)^\frac p2\right)^\frac 1p\geq (2t)^\frac 12 A_q\left(\sum_{i=1}^n \|v_{i}\|^2\right)^\frac 12,&\text{ if }p \geq 2.
\end{cases}
$$
%\begin{enumerate}[{Case} 1.]
%\item if $1\leq p \leq 2$,
%\[
%I_p(v,f)\geq A_q\left(\sum_{i=1}^n \|v_{i}\|^2\right)^\frac 12\left(\sum_{k=1}^n (2t)^k(1-2t)^{n-k} {n \choose k} \frac kn\right)^\frac 1p=(2t)^\frac 1p A_q\left(\sum_{i=1}^n \|v_{i}\|^2\right)^\frac 12.
%\]
%\item if $ p \geq 2$,
%\[
%\begin{split}
%I_p(v,f)&\geq A_q\left(\sum_{i=1}^n \|v_{i}\|^2\right)^\frac 12\left(\sum_{k=1}^n (2t)^k(1-2t)^{n-k} {n \choose k} (\frac kn)^\frac p2\right)^\frac 1p\geq (2t)^\frac 12 A_q\left(\sum_{i=1}^n \|v_{i}\|^2\right)^\frac 12.
%\end{split}
%\]
%\end{enumerate}
Therefore, $I_p(v,f)\geq A_q\min\{(2t)^\frac1p,(2t)^\frac12\}(\sum_{i=1}^n \|v_{i}\|^2)^{\frac{1}{2}}$.
For (2), the proof is similar %to (1)
and thus we omit it. %the reader could find the details in the Appendix.
\end{proof}
%According to \cite{KOT96,YTK06}, $L^p$ norm belongs to $\mathcal{H}_p$, which means that Lemma \ref{hannerAB-P} is applicable to  $L^p$-space.

\begin{defn}
Let
$$G=\left\{T:X\to X\left| \int_X f(T(x))dx= \int_X f(x)dx,~\forall f\in L^1(X)\right.\right\}$$
be the collection of measure preserving transformations on $X$. %Denoted  by $G_{odd}$ the subgroup of $G$ that consists of odd functions.
%We consider a subgroup of $G$ which consists of odd functions, which is denoted  by $G_{odd}$.

Let $G_{odd}=\{T\in G: T(-x)=-T(x)\;,\forall x\in X\text{ a.e.}\}$ be the set of odd measure preserving transformations on $X$.
\end{defn}
\begin{lemma}\label{lem:minor}
For any $T\in G$, the map $\mathcal{T}:X^n\rightarrow X^n$ defined by
$\mathcal{T}(x_1,\cdots,x_n):=(T(x_1),\cdots,T(x_n))$ is a measure preserving transformation on $X^n$.
\end{lemma}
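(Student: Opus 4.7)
The plan is to verify the defining property of a measure preserving transformation on $X^n$: for every $F\in L^1(X^n,\mu^{\otimes n})$,
\[
\int_{X^n} F\circ \mathcal{T}\, d\mu^{\otimes n}=\int_{X^n} F\, d\mu^{\otimes n}.
\]
Equivalently, we must show $(\mu^{\otimes n})(\mathcal{T}^{-1}(E))=(\mu^{\otimes n})(E)$ for every measurable $E\subset X^n$. Since the measurable rectangles $A_1\times\cdots\times A_n$ generate the product $\sigma$-algebra and are closed under finite intersections, by Dynkin's $\pi$-$\lambda$ theorem it is enough to check the identity on rectangles.

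First I would take $F$ to be a simple tensor $F(x_1,\ldots,x_n)=f_1(x_1)\cdots f_n(x_n)$ with $f_i\in L^\infty(X)$, or equivalently the indicator of a rectangle. Fubini's theorem factors the integral, and the hypothesis $T\in G$ applied in each coordinate gives
\[
\int_{X^n} \prod_{i=1}^n f_i(T(x_i))\,dx_1\cdots dx_n = \prod_{i=1}^n \int_X f_i(T(x_i))\,dx_i = \prod_{i=1}^n \int_X f_i(x_i)\,dx_i,
\]
which re-assembles via Fubini into $\int_{X^n} F\, d\mu^{\otimes n}$. Specializing to $f_i=\mathbf{1}_{A_i}$ yields $(\mu^{\otimes n})(\mathcal{T}^{-1}(A_1\times\cdots\times A_n))=\prod_i\mu(A_i)=(\mu^{\otimes n})(A_1\times\cdots\times A_n)$.

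Finally I would close the argument by invoking the $\pi$-$\lambda$ theorem: the collection of measurable $E\subset X^n$ for which $(\mu^{\otimes n})(\mathcal{T}^{-1}(E))=(\mu^{\otimes n})(E)$ is a $\lambda$-system (closed under complements and countable disjoint unions, because $\mathcal{T}^{-1}$ commutes with these set operations and $\mu^{\otimes n}$ is a measure), and it contains the $\pi$-system of measurable rectangles. Hence it contains the whole product $\sigma$-algebra, which is exactly the statement that $\mathcal{T}$ preserves $\mu^{\otimes n}$.

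There is no real obstacle here; the only thing to be careful about is that one should not try to verify the identity directly on arbitrary $L^1$ functions, but rather reduce to rectangles and extend by a standard monotone-class / $\pi$-$\lambda$ argument. Equivalently, one could verify it for nonnegative simple tensors, use monotone convergence to pass to nonnegative measurable $F$, and then split $F$ into positive and negative parts to recover the full $L^1$ statement.
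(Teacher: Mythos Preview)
Your argument is correct and is the standard proof: check the measure-preserving identity on measurable rectangles via Fubini and the hypothesis on $T$, then extend to the whole product $\sigma$-algebra by the $\pi$--$\lambda$ theorem. The paper in fact omits the proof entirely, stating only that it is ``very elementary,'' so there is nothing further to compare; your write-up simply fills in what the authors left implicit.
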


The proof of Lemma \ref{lem:minor} is very elementary and thus we omit it.

\begin{lemma}\label{lem:main}
The function $I_p(v,\cdot)$ possesses the following properties:
\begin{itemize}
\item[({I}1)] $I_p(v,kf)=kI_p(v,f)$ for any $k\ge0$ and $f\in L^p(X)$ (1-homogeneousity);
\item[({I}2)] $I_p(v,f+g)\le I_p(v,f)+I_p(v,g)$, $\forall f,g\in L^p(X)$ (sub-additivity); %, if $p\ge 1$
\item[({I}3)] $I_p(v,-f)= I_p(v,f)$ for any $f\in L^p(X)$ (even);
\item[({I}4)] $I_p(v,\cdot)$ is continuous on $L^p(X)$ (continuity);
\item[({I}5)] $I_p(v,\cdot)$ is convex on $L^p(X)$ (convexity); %, if $p\ge 1$
\item[({I}6)] $I_p(v,f\circ T)=I_p(v,f)$ for any $T\in G$ and $f \in L^p(X)$ (invariant);
\item[({I}7)] $I_p(v,f)\ge I_p(v,g)$ holds for odd functions $f$ and $g$, if there exists $T_1 ,T_2\in G_{odd}$ such that $|f\circ T_1(x)|\ge |g\circ T_2(x)|$ a.e. (monotonicity). % and if $p\ge 1$
\end{itemize}
\end{lemma}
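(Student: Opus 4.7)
The plan is to dispatch the first six items by direct inspection of \eqref{eq:Ipvf}. For (I1) and (I3), scaling $f$ by $k\ge 0$ produces a factor $k^p$ inside the integral, while replacing $f$ by $-f$ only flips the overall sign inside the norm. For (I2), I would apply the pointwise triangle inequality $\|\sum_i(f+g)(x_i)v_i\|\le\|\sum_i f(x_i)v_i\|+\|\sum_i g(x_i)v_i\|$ followed by Minkowski's inequality in $L^p(X^n,\mu^n)$. Property (I5) is then immediate from (I1) combined with (I2). For (I6), Lemma \ref{lem:minor} says the product map $\mathcal{T}$ preserves $\mu^n$, so the change of variables $y_i=T(x_i)$ leaves the defining integral unchanged.

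\textbf{Continuity (I4).} The same triangle-plus-Minkowski argument, applied to $\|\sum_i h(x_i)v_i\|\le\sum_i|h(x_i)|\|v_i\|$, yields the a priori bound $I_p(v,h)\le\|h\|_p\sum_i\|v_i\|$. Combined with (I2) and (I3) this gives the reverse triangle inequality $|I_p(v,f)-I_p(v,g)|\le I_p(v,f-g)\le\|f-g\|_p\sum_i\|v_i\|$, so $I_p(v,\cdot)$ is in fact Lipschitz on $L^p(X)$.

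\textbf{Monotonicity (I7), the main point.} I would first invoke (I6) to replace $f$ by $f\circ T_1$ and $g$ by $g\circ T_2$; since $T_1,T_2\in G_{odd}$ the new functions remain odd, so without loss of generality we may assume $|f(x)|\ge|g(x)|$ a.e.\ with $f,g$ still odd. Next I would introduce the auxiliary function
$$\psi(y_1,\ldots,y_n):=\frac{1}{2^n}\sum_{\epsilon\in\{-1,1\}^n}\Big\|\sum_{i=1}^n\epsilon_iy_iv_i\Big\|^p.$$
The key observation is that each coordinate-wise sign flip $(x_1,\ldots,x_n)\mapsto(\epsilon_1x_1,\ldots,\epsilon_nx_n)$ is $\mu^n$-preserving (by Fubini and $\mu(A)=\mu(-A)$) and, by oddness of $f$, sends $f(x_i)$ to $\epsilon_if(x_i)$. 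Averaging the integral defining $I_p(v,f)^p$ over all $2^n$ such flips therefore yields the Rademacher representation
$$I_p(v,f)^p=\int_{X^n}\psi(f(x_1),\ldots,f(x_n))\,dx_1\cdots dx_n,$$
and likewise for $g$. Once $\psi$ is known to be nondecreasing in each $|y_i|$, the pointwise inequality $|f(x_i)|\ge|g(x_i)|$ a.e.\ gives $I_p(v,f)^p\ge I_p(v,g)^p$ upon integration.

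\textbf{Monotonicity of $\psi$, and the main obstacle.} Freezing $y_2,\ldots,y_n$ and $\epsilon_2,\ldots,\epsilon_n$, set $w:=\sum_{j\ge 2}\epsilon_jy_jv_j$. Then $t\mapsto\|w+tv_1\|$ is convex (triangle inequality), and composing with the convex nondecreasing map $s\mapsto s^p$ on $[0,\infty)$ (here $p\ge 1$) shows $t\mapsto\|w+tv_1\|^p$ is convex. Its symmetrisation $\tfrac12(\|w+tv_1\|^p+\|w-tv_1\|^p)$ is therefore a convex even function of $t$, hence nondecreasing in $|t|$. Iterating coordinate by coordinate (a tower-property argument on the discrete $\{-1,1\}^n$-average) yields monotonicity of $\psi$ in each $|y_i|$, completing (I7). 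The only delicate step in the whole lemma is this sign-averaging reduction; every other property is a formal consequence of the triangle inequality, Minkowski, and Lemma \ref{lem:minor}.
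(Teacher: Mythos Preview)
Your treatment of (I1)--(I6) matches the paper's exactly. For (I7), however, you take a genuinely different and considerably shorter route. The paper argues through a chain of five claims: first that adding a disjointly supported odd piece can only increase $I_p$; then monotonicity for step functions with ordered coefficients; then an approximation result (using the $p$-barycenter $\arg\min_c\int_{X_i}|f-c|^p$ to build compatible simple approximants of $f$ and $h$); and finally a sign-flip transformation to align the signs of $f$ and $g$ before invoking the approximation. Your Rademacher symmetrisation bypasses all of this: by averaging over the $2^n$ coordinate involutions you replace the integrand by the even-in-each-coordinate function $\psi$, and the convexity of $t\mapsto\|w+tv_1\|^p$ immediately yields coordinatewise monotonicity in $|y_i|$. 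This is cleaner and does not require any density or barycenter considerations; the paper's approach, on the other hand, makes the role of (I4)--(I6) more visible and develops intermediate claims (such as Claim~1 on disjoint supports) that could be of independent use. Both arguments ultimately rest on the same two structural facts---oddness of $f$ together with $\mu(A)=\mu(-A)$, and convexity of $\|\cdot\|^p$---but you exploit them in a single stroke rather than through a layered approximation.
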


\begin{proof}%[Proof of Lemma \ref{lem:main}]
The facts (I1),(I2),(I3) are easy. %Also note that
(I5) is a direct consequence of (I1) and (I2). While (I6) can be directly obtained by Lemma \ref{lem:minor}. For (I4), combining the triangle inequality and Young-Minkowski inequality, we have
\[
\begin{split}
\left(\int_{X^n}\left\|\sum_{i=1}^n v_{i}f(x_i)\right\|^pdx_1\cdots dx_n\right)^{\frac1p}
&\leq  \left(\int_{X^n}\left(\sum_{i=1}^n \|v_{i}f(x_i)\|\right)^pdx_1\cdots dx_n\right)^{\frac1p}\\
&\leq \sum_{i=1}^n \left(\int_{X^n}\|v_{i}f(x_i)\|^pdx_1\cdots dx_n\right)^{\frac1p} =\|f\|_p\sum^n_{i=1}\|v_i\|.
\end{split}
\]
Together with (I2) and (I3), there is
$$|I_p(v,f)-I_p(v,g)|\le I_p(v,f-g)\leq \|f-g\|_p\sum^n_{i=1}\|v_i\|$$
which implies that  $I_p(v,\cdot)$ is a continuous functional on $L^p(X,\mu)$ and then on $L^\infty(X,\mu)$.

Now we only need to prove (I7).

\begin{itemize}
\item[{Claim} 1.] If $\supp f\cap \supp \varphi=\varnothing$, then $I_p(v,f+\varphi)\ge I_p(v,f)$.

Proof of Claim 1. In fact, taking
 $$T(x)=\begin{cases}
 x,& \text{ if } x\not\in\supp \varphi, \\
 -x,& \text{ if }x\in \supp \varphi,
 \end{cases}$$
then for any $A=A_1\cup A_2$ with $A_1\subset\supp \varphi$ and $A_2\subset{(\supp \varphi)}^c $, we have
 $$\mu (T^{-1}A)=\mu(-A_1)+\mu(A_2)=\mu(A_1)+\mu(A_2)=\mu(A).$$
Hence, $T\in G$, and also $(f+\varphi)\circ T=f-\varphi$.
Thus, (I6) implies $I_p(v,f+\varphi)=I_p(v,f-\varphi)$. Together with $I_p(v,f+\varphi)+I_p(v,f-\varphi)\ge I_p(v,2f)$, one gets $I_p(v,f+\varphi)\ge I_p(v,f)$.

\item[{Claim} 2.] Let $g_a(x)=a(1_Y-1_{-Y})+g(x)$, where $g$ is an odd function with $\supp g\cap (Y\cup(-Y))=\varnothing$ and $a\ge 0$. Then the function $Z(a):=I_p(v,g_a)$ is increasing on $[0,+\infty)$.

Proof of Claim 2. Applying Claim 1 to $I_p(v,g_a)$ by taking $f=a(1_Y-1_{-Y})$ and $\varphi=g$, we immediately obtain $I_p(v,g_a)\ge I_p(v,g)$, i.e., $ Z(a)\ge Z(0)$, $\forall a\ge 0$.
Moreover, noting that $tg_a+(1-t)g_b=(ta+(1-t)b)(1_Y-1_{-Y})+g(x)=g_{ta+(1-t)b}(x)$, we may apply (I5) to obtain $tZ(a)+(1-t)Z(b)\ge Z(ta+(1-t)b)$. For any $a>b>0$, we have
$$Z(a)=\frac ba Z(a)+(1-\frac ba)Z(a) \ge \frac ba Z(a)+(1-\frac ba)Z(0)\ge Z(\frac ba a+(1-\frac ba)0)=Z(b).$$
Hence, Claim 2 is proved.

\item[{Claim} 3.] For simple functions $f(x)=\sum_{i=1}^n a_i(1_{X_i}-1_{-X_i})$ and $h(x)=\sum_{i=1}^n b_i(1_{X_i}-1_{-X_i})$ with $a_i\ge b_i\ge0$, in which $\{ X_i\cup (-X_i)\}^{n}_{i=1}$ are pairwise disjoint, there is $I_p(v,f)\ge I_p(v,h)$.

Proof of Claim 3. Applying Claim 2 to $g(c_1,c_2,\cdots,c_n):=\sum_{i=1}^n c_i(1_{X_i}-1_{-X_i})$, we have
\[
\begin{split}
I_p(v,f)&=I_p(v,g(a_1,a_2,\cdots,a_n))\\
&\ge I_p(v,g(b_1,a_2,\cdots,a_n))\\
&\ge I_p(v,g(b_1,b_2,\cdots,a_n))\\
& \cdots\\
&\ge I_p(v,g(b_1,b_2,\cdots,b_n))=I_p(v,h).
\end{split}
\]

\item[{Claim} 4.]  If two odd functions $f$ and $h$ satisfy $|f(x)|\ge |h(x)|$ and $\pm h(x)>0 \Rightarrow \pm f(x)>0$ a.e., then for any $\epsilon>0$, there exist simple functions $f_\epsilon$ and $h_\epsilon$ of the forms %which could be written as
    $f_\epsilon(x)=\sum_{i=1}^n a_i(1_{X_i}-1_{-X_i})$ and $h_\epsilon(x)=\sum_{i=1}^n b_i(1_{X_i}-1_{-X_i})$ with $a_i\ge b_i\ge0$, such that $\|f-f_\epsilon\|_p<\epsilon$ and $\|h-h_\epsilon\|_p<\epsilon$. %, {\color{red}where $a_i$ and $b_i$ are $p$-barycenters of measures on $\mathbb{R}$ induced by $f$ and $h$ respectively.}

Proof of Claim 4.  Denote by $f^+(x)=f(x)1_{f(x)> 0}$. We can take $\widetilde{f}_\epsilon^+=\sum_{i=1}^m d_i 1_{Y_i}$ with $d_i>0$ for $1\leq i\leq m$ and $\|f^+-\widetilde{f}^+_\epsilon\|_p<\epsilon/2$.  Thus, the odd simple function $\widetilde{f}_\epsilon:=\sum_{i=1}^m d_i (1_{Y_i}-1_{-Y_i})$ satisfies\footnote{In fact, By the measure theory, an odd integrable function $f:X\to \mathbb{R}$ can be approximated by $f_n=\sum_{i=1}^{n^2}\frac{i}{n}(1_{X_i}-1_{-X_i})$, where $X_i=f^{-1}(\frac{i-1}{n},\frac{i}{n}]$, $i=1,\cdots,n^2$.}  $\|f-\widetilde{f}_\epsilon\|_p<\epsilon$. Moreover, without loss of generality, we assume %one can assume
%$Y_i$ %adjust $Y_i$ to
%satisfy that for any $i$, %there is
$Y_i\subset \{x|f(x)>0\}$ and $\supp f=\cup^{m}_{i=1}(Y_i\cup -Y_i)$, $\forall i$. Similarly, there exists $\widetilde{h}_\epsilon:=\sum_{i=1}^{m'} d_i' (1_{Y_i'}-1_{-Y_i'})$ such that $\|h-\widetilde{h}_\epsilon\|_p<\epsilon$. Now, we take a refinement $\{X_i\}_{i=1}^n$ of both $\{Y_i\}_{i=1}^m$ and $\{Y_i'\}_{i=1}^{m'}$,  %which will be written as,
and then we construct the  functions  $f_\epsilon(x)=\sum_{i=1}^n a_i(1_{X_i}-1_{-X_i})$ and $h_\epsilon(x)=\sum_{i=1}^n b_i(1_{X_i}-1_{-X_i})$ satisfying %by solving
$\int_{X_i} |f(x)-a_i|^pdx = \min\limits_{c\in \mathbb{R}}\int_{X_i} |f(x)-c|^pdx$ and $\int_{X_i} |h(x)-b_i|^pdx = \min\limits_{c\in \mathbb{R}}\int_{X_i} |h(x)-c|^pdx$. %, where $a_i$ and $b_i$ are the $p$-barycenters of graphs of $f$ and $h$ respectively.
Now we prove that $\|f-f_\epsilon\|_p\le\|f-\widetilde{f}_\epsilon\|_p<\epsilon$ and
$\|h-h_\epsilon\|_p\le\|h-\widetilde{h}_\epsilon\|_p<\epsilon$.
In fact, according to the above construction, there is
\[
\begin{split}
&\|f-f_\epsilon\|_p=(\int_X|f-f_\epsilon|^pdx)^\frac1p\\
=&\left(\sum^n_{i=1}\left(\int_{X_i}|f-f_\epsilon|^pdx+
\int_{-X_i}|f-f_\epsilon|^pdx\right)\right)^\frac1p\\
=&\left(\sum^n_{i=1}\left(\int_{X_i}|f-a_i|^pdx+
\int_{-X_i}|f+a_i|^pdx\right)\right)^\frac1p\\
\leq& \left(\sum^n_{i=1}\left(\int_{X_i}|f-d_i|^pdx+
\int_{-X_i}|f+d_i|^pdx\right)\right)^\frac1p
=\|f-\widetilde{f}_\epsilon\|_p<\epsilon.
\end{split}
\]
Similarly, we have $\|h-h_\epsilon\|_p\le\|h-\widetilde{h}_\epsilon\|_p<\epsilon$.

Next, we only need to prove $a_i\ge b_i\ge0$.
Since $f|_{X_i}\geq 0$, the function $c\mapsto\int_{X_i} |f(x)-c|^pdx$ is decreasing on $(-\infty,0)$. Thus the minimum of $\int_{X_i} |f(x)-c|^pdx$ must arrive at some $c\geq 0$. That means  $a_i\geq 0$. Similarly, $b_i\geq 0$.

%we use the fact
Note that $a_i$ satisfies $\int_{X_i} |f(x)-a_i|^pdx = \min\limits_{c\in \mathbb{R}}\int_{X_i} |f(x)-c|^pdx$ if and only if
 $$\int_{\{x|x\in X_i, f(x)>a_i \}}(f(x)-a_i)^{p-1}dx=\int_{\{x|x\in X_i,f(x)<a_i  \}}(a_i-f(x))^{p-1}dx.$$
If $b_i> a_i$, then by the coarea formula,
\begin{align*}
&\int_{\{x\in X_i|h(x)>b_i\}}(h(x)-b_i)^{p-1}dx\\
=&\int^\infty_{0}\mu\{x\in X_i|h(x)> b_i,(h(x)-b_i)^{p-1}> t\}dt\\
\leq&\int^\infty_{0}\mu\{x\in X_i|h(x)> b_i,(f(x)-b_i)^{p-1}> t\}dt\\
=&\int_{\{x\in X_i|h(x)>b_i\}}(f(x)-b_i)^{p-1}dx
\leq \int_{\{x\in X_i|f(x)>b_i\}}(f(x)-b_i)^{p-1}dx\\
<& \int_{\{x\in X_i|f(x)>a_i\}}(f(x)-a_i)^{p-1}dx
=\int_{\{x\in X_i|f(x)<a_i\}}(a_i-f(x))^{p-1}dx\\
<&\int_{\{x\in X_i|f(x)<b_i\}}(b_i-f(x))^{p-1}dx
\leq\int_{\{x\in X_i|h(x)<b_i\}}(b_i-h(x))^{p-1}dx,
\end{align*}
which leads to a contradiction. Thus $b_i\leq a_i$.
\item[{Claim} 5.]  For odd functions $f$ and $h$ with the properties that $|f(x)|\ge |h(x)|$ and $\pm h(x)>0\Rightarrow\pm f(x)>0$ a.e., we have $I_p(v,f)\ge I_p(v,h)$.

Proof of Claim 5.  By Claim 3, we have $I_p(v,f_\epsilon)\ge I_p(v,h_\epsilon)$. Combining (4) and Claim 4, $I_p(v,f_\epsilon)\le I_p(v,f)+\epsilon'$ and $I_p(v,h_\epsilon)\ge I_p(v,h)-\epsilon'$, which means that $I_p(v,f)\ge I_p(v,h)-2\epsilon'$. By the arbitrariness of $\epsilon'>0$, we derive $I_p(v,f)\ge I_p(v,h)$.
\end{itemize}

Finally, we turn to prove %the proof of
(I7). Since $f$ and $g$ are odd, we may assume that $|f(x)|\ge |g(x)|$ a.e. by (I6). Now, let   $$T(x)=\begin{cases}
 -x,& \text{ if } g(x)f(x)<0, \\
 x,& \text{ otherwise; }
 \end{cases}$$
 and let $h(x)=g(T(x))$. Then, $h(x)$ is odd with $|f(x)|\ge |h(x)|$, and $\pm h(x)>0\Rightarrow\pm f(x)>0$ a.e.. It follows from (I6) and Claim 5 that $I_p(v,g)=I_p(v,h)\le I_p(v,f)$.
\end{proof}

%\begin{remark}\label{remark75}
%Another easy proof for Claim 5 in case of $\mathbb{E}=\R$ is based on the fact that
%\begin{equation}\label{eq:sumci1}
%\sum_{(c_1,\cdots,c_n)\in \{ 1,-1\}^n}\left|\sum_{i=1}^n c_ia_i\right|^p\ge \sum_{(c_1,\cdots,c_n)\in \{ 1,-1\}^n}\left|\sum_{i=1}^n c_ib_i\right|^p
%\end{equation}
% if $|a_i|\ge |b_i|$, and the fact
%$$I^p_p(v,f)=\int_{X^n}\frac{1}{2^n}\sum_{(c_1,\cdots,c_n)\in \{ 1,-1\}^n} \left|\sum_{i=1}^n c_iv_{i}f(x_i)\right|^pdx_1\cdots dx_n.$$
%Here the equality \eqref{eq:sumci1} could be proved by mathematical induction. The key observation %claim used in the mathematical induction
%is that $|c+t|^p+|c-t|^p$ is increasing for $t>0$, where $p\ge 1$. %The specific proof is in Appendix.
%While if $|\cdot|$ is replaced by a general norm $\|\cdot\|$, we cannot get the monotonicity and thus the method fails.
%\end{remark}

\subsection{ Proof of Theorem \ref{Khint-norm1} }
\label{subsec:proof}

Based on Lemma \ref{lem:main}, we can derive the two-side bounds of $I_p(v,f)$ in the following way.

\begin{pro}\label{pro:twoside-control-pre}
For any nonzero odd  function $f\in L^\infty(X)$, there exist positive numbers $a$ and $b$ as well as measurable subsets $A$ and $B$ with positive measures such that $I_p(v,a(1_A-1_{-A}))\ge I_p(v,f)\ge I_p(v,b(1_B-1_{-B}))$.
\end{pro}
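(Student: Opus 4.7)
The plan is to apply the monotonicity property (I7) of Lemma~\ref{lem:main} twice, choosing explicit odd pointwise majorant and minorant of $f$ of the simple form $c(1_C - 1_{-C})$. Oddness of $f$ makes such sandwiching natural: for any superlevel set $C = \{f \ge c\}$ with $c > 0$ we automatically have $-C \subset \{f \le -c\}$, so $C \cap (-C) = \varnothing$ and $c(1_C - 1_{-C})$ is a legitimate odd simple function.

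For the upper bound, I would take $a := \|f\|_\infty < \infty$ and $A := \{x \in X : f(x) > 0\}$. Since $f$ is nonzero and odd, $\mu(A) = \mu(-A) > 0$, and by construction $A \cap (-A) = \varnothing$. The function $a(1_A - 1_{-A})$ equals $\pm a$ on $A \cup (-A) = \supp f$ and vanishes elsewhere, so
$$|a(1_A - 1_{-A})(x)| \ge |f(x)| \quad \text{a.e.}$$
Applying (I7) with $T_1 = T_2 = \mathrm{id} \in G_{odd}$ yields $I_p(v, a(1_A - 1_{-A})) \ge I_p(v, f)$.

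For the lower bound, since $f$ is nonzero in $L^\infty(X)$ and odd, monotone convergence gives $\mu(\{f \ge b\}) \uparrow \mu(\{f > 0\}) > 0$ as $b \downarrow 0$, so we can pick $b > 0$ so small that $B := \{f \ge b\}$ has positive measure. By oddness $B \cap (-B) = \varnothing$, and on $B \cup (-B)$ we have $|f(x)| \ge b = |b(1_B - 1_{-B})(x)|$, while $b(1_B - 1_{-B})$ vanishes outside, so $|f(x)| \ge |b(1_B - 1_{-B})(x)|$ a.e. Invoking (I7) once more, again with the identity transformations, gives $I_p(v, f) \ge I_p(v, b(1_B - 1_{-B}))$, which finishes the proof.

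There is no real obstacle here, because the heavy lifting — in particular Claims 1--5 used to establish the monotonicity (I7) — has already been carried out in Lemma~\ref{lem:main}. The only items to verify carefully are the disjointness conditions $A \cap (-A) = B \cap (-B) = \varnothing$ (immediate from $f$ being odd) and the strict positivity $\mu(A), \mu(B) > 0$ (immediate from $f \not\equiv 0$ together with the choice of $b$ small enough).
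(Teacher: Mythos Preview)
Your proof is correct and follows essentially the same route as the paper: the paper also sets $A=\{f>0\}$ with $a=\mathrm{ess\,sup}|f|$ for the upper bound and chooses $b>0$ small enough that $B=\{f>b\}$ has positive measure for the lower bound, then invokes (I7) in both directions. Your version simply spells out the pointwise comparisons and the choice $T_1=T_2=\mathrm{id}$ more explicitly.
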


\begin{proof}%[Proof of Proposition \ref{pro:twoside-control-pre}]
Let $A=\{x\in X|f(x)>0\}$ and $a=\mathop{\mathrm{ess\,sup}}\limits_{x\in X} |f(x)|$.  Then by Lemma \ref{lem:main} (I7), $I_p(v,a(1_A-1_{-A}))\ge I_p(v,f)$.

Take $b>0$ such that $\mu(\{x\in X|f(x)>b\})>0$, and let $B=\{x\in X|f(x)>b\}$. According to Lemma \ref{lem:main}, we have $I_p(v,f)\ge I_p(v,b(1_B-1_{-B}))$.
\end{proof}

%The lower bound of $I_p(v,f)$ could be improved as follows.
Next we determine the precise upper and lower bounds of $I_p(v,f)$.

\begin{lemma}\label{sharpbyconvex}For any $T_i\in G_{odd}$ and $\lambda_i\ge0$ with $\sum_{i=1}^m \lambda_i=1$, we have
%$I_p(v,f)\ge I_p(v,\sum_{i=1}^m \lambda_if\circ T_i)$ for $\sum_{i=1}^m \lambda_i=1$ with $\lambda_i\ge0$, and for $T_i\in G_{odd}$.
$$I_p(v,f)\ge I_p(v,\sum_{i=1}^m \lambda_if\circ T_i).$$
\end{lemma}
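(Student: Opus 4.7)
The plan is to obtain this inequality as an immediate consequence of the convexity property (I5) and the measure-preserving invariance property (I6) already established in Lemma \ref{lem:main}. The key observation is that $G_{odd} \subset G$, so each $T_i$ appearing in the hypothesis is in particular a measure-preserving transformation of $X$, and the oddness requirement in the definition of $G_{odd}$ plays no additional role here since we only need the invariance statement.

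First, I would apply (I5), the convexity of $I_p(v,\cdot)$ on $L^p(X)$, to the convex combination $\sum_{i=1}^m \lambda_i (f\circ T_i)$, which gives
$$I_p\Bigl(v,\sum_{i=1}^m \lambda_i f\circ T_i\Bigr)\;\le\;\sum_{i=1}^m \lambda_i\, I_p(v,f\circ T_i).$$
Next, I would invoke (I6), the invariance of $I_p(v,\cdot)$ under composition with any $T\in G$, applied to each $T_i\in G_{odd}\subset G$ separately to conclude $I_p(v,f\circ T_i)=I_p(v,f)$ for every $i=1,\dots,m$. Substituting this into the previous inequality yields
$$I_p\Bigl(v,\sum_{i=1}^m \lambda_i f\circ T_i\Bigr)\;\le\;\sum_{i=1}^m \lambda_i\, I_p(v,f)\;=\;I_p(v,f),$$
where the final equality uses $\sum_{i=1}^m \lambda_i=1$.

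There is no real obstacle: once the convexity and $G$-invariance of $I_p(v,\cdot)$ are in hand, the proof is an immediate two-line computation, namely convexity pushes the $\lambda_i$'s outside the functional and invariance collapses each term to a common value $I_p(v,f)$. The only care needed is the trivial remark that $G_{odd}\subset G$ so that (I6) actually applies; this is part of the very definition of $G_{odd}$ stated just before Lemma \ref{lem:minor}, so no extra verification is required.
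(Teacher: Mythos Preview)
Your proof is correct and essentially identical to the paper's own argument: the paper likewise invokes (I6) to write $I_p(v,f)=\sum_{i=1}^m \lambda_i I_p(v,f\circ T_i)$ and then applies (I5) to bound this below by $I_p\bigl(v,\sum_{i=1}^m \lambda_i f\circ T_i\bigr)$. The only cosmetic difference is the order in which (I5) and (I6) are invoked.
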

\begin{proof}
By (I5) and (I6) of Lemma \ref{lem:main}, $$I_p(v,f)=\sum_{i=1}^m \lambda_iI_p(v,f\circ T_i) \ge I_p(v,\sum_{i=1}^m \lambda_if\circ T_i).$$
\end{proof}

\begin{lemma}\label{sharped} For $A= \{x\in X|f(x)>a\}$ with some $a\ge 0$,  %$A\subset \{x\in X|f(x)>0\}$, %that admits an ergodic transformation,
denoting by $h_A=\frac{\|f|_{A\cup(-A)}\|_1}{2\mu(A)}(1_A-1_{-A})$, we have $I_p(v,f)\ge I_p(v,h_A)$.
%$I_p(v,f)\ge I_p(v,h_A)$ for any $A\subset \{x\in X|f(x)>0\}$, where $h_A=\frac{\|f|_{A\cup(-A)}\|_1}{2\mu(A)}(1_A-1_{-A})$.
\end{lemma}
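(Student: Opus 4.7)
The plan is a three-step reduction leveraging Lemma~\ref{sharpbyconvex}, Birkhoff's ergodic theorem, and the continuity property (I4) of Lemma~\ref{lem:main}. First observe that, by the oddness of $f$ and positivity of $f$ on $A$, the constant defining $h_A$ equals $c := \mu(A)^{-1}\int_A f\,d\mu$. To reduce to the case $\supp f \subset A \cup (-A)$, I would invoke monotonicity (I7) with $T_1 = T_2 = \mathrm{id}$ and $g = f \cdot 1_{A \cup (-A)}$: both are odd and $|f| \geq |g|$ pointwise, so $I_p(v, f) \geq I_p(v, g)$. Hereafter one may assume $\supp f \subset A \cup (-A)$.

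Next I would use that $(A, \mu|_A / \mu(A))$ is a non-atomic Lebesgue probability space (recall we may assume $X = [-\tfrac12, \tfrac12]$), and hence carries an ergodic measure preserving transformation $T_0$. Extend $T_0$ to $T \colon X \to X$ by $T(x) = T_0(x)$ on $A$, $T(x) = -T_0(-x)$ on $-A$, and $T(x) = x$ otherwise. This $T$ preserves $\mu$ and commutes with the involution, so $T \in G_{odd}$; consequently every power $T^k$ lies in $G_{odd}$. Applying Lemma~\ref{sharpbyconvex} with $\lambda_i = 1/n$ and $T_i = T^i$ for $i = 0, 1, \ldots, n-1$ yields
$$I_p(v, f) \;\geq\; I_p\bigl(v, \bar f_n\bigr), \qquad \bar f_n := \frac{1}{n}\sum_{k=0}^{n-1} f \circ T^k.$$

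By Birkhoff's ergodic theorem applied to the ergodic system $(A, T_0)$, one has $\bar f_n(x) \to c$ for a.e.\ $x \in A$; by oddness, $\bar f_n(x) \to -c$ for a.e.\ $x \in -A$; and $\bar f_n \equiv 0$ outside $A \cup (-A)$. Thus $\bar f_n \to h_A$ pointwise a.e., and since $\|\bar f_n\|_\infty \leq \|f\|_\infty$, dominated convergence gives $\bar f_n \to h_A$ in $L^p(X)$. The continuity property (I4) then produces $I_p(v, \bar f_n) \to I_p(v, h_A)$, from which $I_p(v, f) \geq I_p(v, h_A)$ follows.

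The main technical step will be the construction of the odd ergodic transformation $T$: the extension from $T_0$ via the involution and the identity outside $A \cup (-A)$ must be verified to preserve $\mu$ on all of $X$, and Birkhoff's theorem on $-A$ must be correctly derived from the relation $f \circ T(x) = -f(T_0(-x))$ for $x \in -A$. Neither ingredient is deep, but the bookkeeping around oddness is the most delicate part of the argument; the remaining steps are essentially combinations of earlier lemmas.
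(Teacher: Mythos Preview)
Your proof is correct and follows essentially the same route as the paper: reduce to $f|_{A\cup(-A)}$ via (I7), apply Lemma~\ref{sharpbyconvex} to ergodic averages under an odd measure-preserving map that is ergodic on $A$, and invoke Birkhoff's theorem to identify the limit with $h_A$. The only cosmetic differences are that you make the limit passage explicit via (I4) and dominated convergence, while the paper instead passes to a rearrangement $\tilde f$ on $[-\tfrac12,\tfrac12]$ with Lebesgue measure so that $A$ becomes an interval on which an ergodic map clearly exists; both fill the same gap.
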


\begin{proof}
Lemma \ref{lem:main} (I7) shows that $I_p(v,f)\ge I_p(v,f|_{A\cup (-A)})$. According to Birkhoff's theorem, for any $T\in G_{odd}$ with $T^{-1}(A)=A$, %and $T^{-1}(-A)=-A$,
there exists $\hat{f}_{A,T}\in L^\infty(X)\subset L^1(X)$ such that
$$\frac1m \sum_{i=0}^{m-1} f|_{A\cup (-A)}(T^ix)\longrightarrow \hat{f}_{A,T}(x),\;\; m\to+\infty$$
for $x\in X$ almost everywhere. Here $\hat{f}_{A,T}\circ T=\hat{f}_{A,T}$ and $\int_A f(x)dx=\int_A \hat{f}_{A,T}(x)dx$.
Thus, by means of Lemma \ref{sharpbyconvex}, there is $I_p(v,f)\ge I_p(v,f|_{A\cup (-A)})\ge I_p(v,\hat{f}_{A,T})$.

If $T$ is further assumed to be ergodic restricted on $A$, then
%it can be verified that there exists $T\in G_{odd}$ such that
$$ \hat{f}_{A,T}(x)=\frac{\|f|_{A\cup (-A)}\|_1}{2\mu(A)}(1_A(x)-1_{-A}(x))=h_A(x),\;\;\forall x\text{ a.e.}$$ %for $x\in A$ almost everywhere.
%Thus, with the help of Lemma \ref{sharpbyconvex}, there is $I_p(v,f)\ge I_p(v,f|_{A\cup (-A)})\ge I_p(v,h_A)$.
Therefore, $I_p(v,f)\ge I_p(v,h_A)$. Note that there exists a function $\tilde{f}\in L^\infty[-\frac12,\frac12]$ possessing the same distribution with $f$ (i.e., $\nu\{\tilde{f}\le t\}=\mu\{f\le t\}$, $\forall t\in\R$, where $\nu$ is the standard  Lebesgue measure on $[-\frac12,\frac12]$). By the rearrangement of $\tilde{f}$, we may assume $\tilde{f}$ is odd and non-decreasing on $[-\frac12,\frac12]$. Accordingly, the superlevel set $A=(\delta,\frac12]$ for some $\delta\in (0,\frac12)$. Clearly, there is an ergodic transformation on the interval $A$, and thus the proof is completed.
\end{proof}

\begin{proof}[Proof of Theorem \ref{Khint-norm1}]
According to Lemma \ref{hannerAB-P}, Proposition \ref{pro:twoside-control-pre} and Lemma \ref{sharped},
there exist some positive constant $c_{f,p,q}$  such that
 $$ I_p(v,f) \ge I_p(v,b(1_B-1_{-B}))\geq c_{f,p,q}\left(\sum^n_{i=1}\|v_{i}\|^2\right)^\frac 12
$$
whenever $\mathbb{E}$ is of Hanner cotype $(q,n)$ with $q\leq p$. Similarly, there exists  $C_{f,p,q}>0$ such that
$$
 C_{f,p,q}\left(\sum^n_{i=1}\|v_{i}\|^2\right)^\frac 12\geq I_p(v,a(1_A-1_{-A}))\ge I_p(v,f)$$
whenever $\mathbb{E}$ is of Hanner type $(q,n)$ with $q\ge p$.
Therefore, we complete the proof for (1) and (2) of Theorem \ref{Khint-norm1}.

Now we provide a suitable choice of $c_{f,p,q}$ and $C_{f,p,q}$. In fact, we may take $a=\|f\|_\infty$ by the proof of Proposition \ref{pro:twoside-control-pre}, and thus we can take $C_{f,p,q}= B_q\max\{\mu (\mathrm{supp}(f))^{\frac1p},\mu (\mathrm{supp}(f))^{\frac12}\}\|f\|_\infty$ according to Lemma \ref{hannerAB-P}.
%Similarly, again by Proposition \ref{pro:twoside-control-pre}, the lower constant $c_{f,p,q}$ is not smaller than $$\sup\limits_{S\subset \{f>0\}}A_q\min\{(2\mu(S))^{\frac1p-1},(2\mu(S))^{-\frac12}\}\mathop{\mathrm{essinf}}\limits_{x\in S}f(x).$$
%On the other hand, by Lemma \ref{sharped}, we have $$I_p(v,f)\ge \frac{\|f|_{S\cup(-S)}\|_1}{2\mu(S)}I(v,1_S-1_{-S})$$ and by  means of
% Lemma \ref{hannerAB-P},
%an appropriate choice for $c_{f,p,q}$ is not smaller than $A_q\min\{(2\mu(S))^{\frac1p-1},(2\mu(S))^{-\frac12}\}\|f|_{S\cup (-S)}\|_1$ for all $S\subset \{f>0\}$ that admits an ergodic transformation.

By Lemma \ref{sharped}, it is easy to verify that for any measurable set $S\subset \{f>0\}$, $$I_p(v,f)\ge \frac{\|f|_{S\cup(-S)}\|_1}{2\mu(S)}I(v,1_S-1_{-S})$$
and then it is ready to apply Lemma \ref{hannerAB-P} to get the lower bound constant $c_{f,p,q}$.
%In summary, we may choose
% $$c_{f,p,q}= A_q \sup\limits_{S\subset \{f>0\}}\min\{(2\mu(S))^{\frac1p-1},(2\mu(S))^{-\frac12}\}\|f|_{S\cup (-S)}\|_1.$$

If $\mathbb{E}$ is $L^2$ space, then $\mathbb{E}$ naturally induces a standard $L^2$ norm on $\mathbb{E}^n$. According to the fact that $L^2$ space is of Hanner cotype (resp., type) $(q,n)$ for any $1\le q\le 2$ (resp., $q\ge 2$) and $n\in\mathbb{Z}_+$, the statement (3) is  proved by taking $q=p$, $S=\{f>0\}$ and $c_{f,p}= A_p\max\{\mu (\mathrm{supp}(f))^{\frac1p-1},\mu (\mathrm{supp}(f))^{-\frac12}\}\|f\|_1$.

 %Thus, for $v\in \mathbb{E}^n$, we have $|\cdot|\in \mathcal{H}_p$ for any $p\neq 0$. By taking $q=p$ and $S=\{f>0\}$, one can choose $c_{f,p}= A_p\max\{\mu (\mathrm{supp}(f))^{\frac1p-1},\mu (\mathrm{supp}(f))^{-\frac12}\}\|f\|_1$.
The proof of Theorem \ref{Khint-norm1} is completed. %\ref{th:main-Khint1}.
\end{proof}

%\begin{remark}
%According to the equivalence of norms, if $\mathbb{E}$ is of finite dimension with norm $\|\cdot\|$,
%then for any $L^{q}$ norm, there exist some positive constants $m_q$ and $M_q$, such that $m_q \|\cdot\|_{L^{q}}\leq\|\cdot\|\leq M_q\|\cdot\|_{L^{q}}$.
%Furthermore,
%there exist some positive constants $\widetilde{c}$ and $\widetilde{C}$ such that
%$$\widetilde{c}\left(\sum^n_{i=1}\|v_{i}\|^2\right)^\frac12\le I_p(v,f)\le \widetilde{C}\left(\sum^n_{i=1}\|v_{i}\|^2\right)^\frac12$$
%where $\widetilde{c}$ and $\widetilde{C}$ depend on $f, p$ and $m_q, M_q$.
%In fact, one can choose $\widetilde{c}$ and $\widetilde{C}$ in detail in virtue of the prove of
%Lemma \ref{hannerAB-P}, Proposition \ref{pro:twoside-control-pre} and Lemma \ref{sharped}.
%
%\end{remark}
%Actually, $I_p(\cdot,f)$ could be viewed as a norm on $\mathbb{E}^n$. And Theorem \ref{Khint-norm1} shows the following two norms, $\|v\|_{I_p(f)}:=I_p(v,f)$
%and $\|v\|_{E,2}:=\sqrt{\sum^n_{i=1}\|v_i\|^2}$, are comparable to some extent.
%It is known that $\sqrt{\sum^n_{i=1}\|v_i\|^2}$ is a norm of $v$ and we will show $I_p(\cdot,f)$ is also a norm in the following proposition.

\subsection{ Bi-norm property of $I_p(\cdot,\cdot)$ }
%Finally, we present more properties on $I_p(\cdot,\cdot)$.

\begin{pro}\label{pronorm}
If $f$ is non-constant a.e. (that is, $\exists c\in \mathbb{R}$ s.t. $\mu(f>c)>0$ and $\mu(f<c)>0$), then $I_p(\cdot,f)$ defines a norm on $\mathbb{E}^n$.
\end{pro}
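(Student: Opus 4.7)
The plan is to verify the four axioms of a norm on $\mathbb{E}^n$: nonnegativity, absolute homogeneity, the triangle inequality, and positive definiteness. Nonnegativity is immediate from the definition \eqref{eq:Ipvf}, and absolute homogeneity $I_p(\lambda v, f) = |\lambda|\, I_p(v,f)$ for $\lambda \in \R$ follows by pulling $|\lambda|$ out of the norm $\|\cdot\|$ inside the integrand; neither step uses the non-constancy hypothesis on $f$.

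For the triangle inequality I would combine the triangle inequality of $\|\cdot\|$ pointwise in $X^n$ with Minkowski's inequality in $L^p(X^n, dx_1\cdots dx_n)$ to obtain
\[
I_p(u+v,f) \;\le\; \left(\int_{X^n}\left(\left\|\sum_i f(x_i)u_i\right\| + \left\|\sum_i f(x_i)v_i\right\|\right)^p dx_1\cdots dx_n\right)^{1/p} \;\le\; I_p(u,f)+I_p(v,f).
\]
This again requires no assumption on $f$, so together with the previous paragraph it handles three of the four axioms essentially for free.

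The main obstacle is positive definiteness, i.e.\ showing that $I_p(v,f)=0$ forces $v=0$. If $I_p(v,f)=0$, then the $\mathbb{E}$-valued function $(x_1,\ldots,x_n)\mapsto \sum_{i=1}^n f(x_i)v_i$ vanishes for a.e.\ $(x_1,\ldots,x_n)\in X^n$. By Fubini's theorem, there is a positive-measure set of tuples $(x_2,\ldots,x_n)$ such that
\[
f(x_1)\,v_1 \;=\; -\sum_{j=2}^n f(x_j)\,v_j \qquad \text{for a.e.\ } x_1 \in X.
\]
Fixing one such tuple, the right-hand side is a fixed vector $w\in\mathbb{E}$, so $f(x_1)v_1 = w$ for a.e.\ $x_1$. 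If $v_1 \ne 0$, this forces $f(x_1)$ to be essentially constant, contradicting the hypothesis that some $c\in\R$ satisfies $\mu(f>c)>0$ and $\mu(f<c)>0$. Hence $v_1=0$, and the same argument applied to each coordinate in turn yields $v_i=0$ for all $i$. The only non-routine ingredient is this Fubini-plus-non-constancy step; the rest of the proof follows standard $L^p$-space reasoning.
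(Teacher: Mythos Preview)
Your proof is correct, and in fact your argument for positive definiteness is more elementary than the one in the paper. The paper's approach is global: it introduces the linear subspace $M=\{(c_1,\ldots,c_n)\in\R^n:\sum_i c_iv_i=0\}$, observes that $I_p(v,f)=0$ forces $(f(x_1),\ldots,f(x_n))\in M$ for a.e.\ $(x_1,\ldots,x_n)$, picks a nonzero $\xi\perp M$, and then uses the non-constancy hypothesis to exhibit a product set $A_\theta\subset X^n$ of positive measure on which $\sum_i f(x_i)\xi_i\neq 0$, contradicting $\hat\mu(Y)=0$. Your approach instead freezes all but one coordinate via Fubini and reduces to the one-dimensional statement that $f(x_1)v_1$ cannot be a.e.\ equal to a fixed vector $w$ when $v_1\ne 0$ and $f$ is non-constant. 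This bypasses the subspace/orthogonality machinery entirely and treats each coordinate in turn; the paper's argument, on the other hand, handles all coordinates simultaneously and makes the role of the non-constancy hypothesis more geometric (it guarantees that the image of $f$ cannot be trapped in a hyperplane). Both are valid; yours is shorter and arguably cleaner for this particular statement.
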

\begin{proof}
Firstly, it is obvious that $I_p(\lambda v,f)=|\lambda|I_p(v,f)$.
Secondly, by Young-Minkowski inequality, we have the triangle inequality, i.e., $I_p(v+u,f)\le I_p(v,f)+I_p(u,f)$, $\forall v,u$.
%\[
%\begin{split}
%I_p(v+u,f)&=\left(\int_{X^n}\left\|\sum_{i=1}^n v_{i}f(x_i)+\sum_{i=1}^n u_{i}f(x_i)\right\|^pdx_1\cdots dx_n\right)^{\frac1p}\\
%&\leq \left(\int_{X^n}\left(\|\sum_{i=1}^nv_{i}f(x_i)\|+\|\sum_{i=1}^nu_{i}f(x_i)\|\right)^pdx_1\cdots dx_n\right)^{\frac1p}\\
%&\leq \left(\int_{X^n}\|\sum_{i=1}^nv_{i}f(x_i)\|^pdx_1\cdots dx_n\right)^{\frac1p}+
%\left(\int_{X^n}\|\sum_{i=1}^nu_{i}f(x_i)\|^pdx_1\cdots dx_n\right)^{\frac1p}\\
%&=I_p(v,f)+I_p(u,f).
%\end{split}
%\]
Now, it suffices to prove %Finally, we prove %concentrate on the fact
that $I_p(v,f)=0$ implies $v=0$. Suppose the contrary that $v=(v_1,\cdots,v_n)\neq 0$.
%We will prove it by contradiction. Assume that
%and define
Denote by $M=\{(c_1,\cdots,c_n)\in \mathbb{R}^n|\sum^n_{i=1}c_iv_i=0\}$.
%\begin{enumerate}[{Claim} 1.] \item

%Claim:
We first show that $M$ is a linear subspace of $\mathbb{R}^n$ with dimension at most $n-1$.
%Proof of Claim: Assume
Suppose the contrary, that $\dim M=n$, then $M=\mathbb{R}^n$.
Taking $c_i=1$ and $c_j=0$ for $j\in\{1,\cdots,n\}\setminus \{i\}$, we get $v_i=0$ for any $i\in\{1,\cdots, n\}$, %there is,
which is a contradiction. So $\dim M \leq n-1$.
Furthermore, there is some $\xi=(\xi_1,\cdots,\xi_n)\in \mathbb{R}^n\setminus\{0\}$
such that $\xi \perp M$.

%\item $v=0$.
%Proof of Claim 2.

Define $Y=\{(x_1,\cdots, x_n)\in X^n|(f(x_1),\cdots,f(x_n))\notin M\}$.
Since $I_p(v,f)=0$, we have $(f(x_1),\cdots,f(x_n))\in M$ for $(x_1,\cdots, x_n)\in X^n$ almost everywhere. That is, $(x_1,\cdots, x_n)\in X^n\setminus Y$ a.e., which means $\hat{\mu}(Y)=0$, where $\hat{\mu}$ is the product measure on $X^n$.
%We give some definitions at first.

Without loss of generality, we may assume that $c\sum_i\xi_i\geq 0$. For any $1\leq i \leq n$, let
$$
\theta_i=
\begin{cases}
1,& \xi_i >0,\\
-1,& \xi_i \leq0,
\end{cases}
$$
and
%For $\theta=(\theta_1,\cdots,\theta_n)\in\{1,-1\}^n$,
let $A_\theta=\{(x_1,\cdots, x_n)\in X^n|(f(x_i)-c)\theta_i> 0, 1\leq i \leq n\}$.
Then for any $(x_1,\cdots, x_n)\in A_\theta$ and for any $1\leq i \leq n$, there is $(f(x_i)-c)\xi_i\geq 0$.  Thus $\sum^n_{i=1}(f(x_i)-c)\xi_i> 0$ and $\sum^n_{i=1}f(x_i)\xi_i> \sum^n_{i=1}c\xi_i\geq 0 $.
Note that $Y \supset \{(x_1,\cdots, x_n)\in X^n|\sum^n_{i=1}f(x_i)\xi_i\neq 0\}$, which implies
$A_\theta\subset \{(x_1,\cdots, x_n)\in X^n|\sum^n_{i=1}f(x_i)\xi_i> 0\} \subset Y$. In consequence,
 $$0<\mu^m(f>c)\mu^{n-m}(f<c)=\hat{\mu}(A_\theta)\leq \hat{\mu}(Y) $$
for some $m$, which contradicts with $\hat{\mu}(Y)=0$.
%Then the product measure of $A_\theta$ denoted by $\hat{\mu}(A_\theta)$ satisfies $\hat{\mu}(A_\theta)=\mu^m(f>c)\mu^{n-m}(f<c)> 0$ for some $m$.

%Next, we will prove that $A_\theta$ is actually a subset of $Y$.
%\begin{enumerate}[{Case} 1.]
%\item Assume $c\sum_i\xi_i\geq 0$.
%For any $1\leq i \leq n$, let
%$$
%\theta_i=
%\begin{cases}
%1,& \xi_i >0;\\
%-1,& \xi_i \leq0.
%\end{cases}
%$$
%Then for any $(x_1,\cdots, x_n)\in A_\theta$ and any $1\leq i \leq n$, there is $(f(x_i)-c)\xi_i\geq 0$.  Thus $\sum^n_{i=1}(f(x_i)-c)\xi_i> 0$ and $\sum^n_{i=1}f(x_i)\xi_i> \sum^n_{i=1}c\xi_i\geq 0 $.
%Note that $Y \supset \{(x_1,\cdots, x_n)\in X^n|\sum^n_{i=1}f(x_i)\xi_i\neq 0\}$, so
%$A_\theta \subset Y$ and $0<\hat{\mu}(A_\theta)\leq \hat{\mu}(Y) $ which contradicts
%$\hat{\mu}(Y)=0$. Hence $v=0$.
%
%\item Assume $c\sum_i\xi_i\leq 0$.
%For any $1\leq i \leq n$, let
%$$
%\theta_i=
%\begin{cases}
%1,& \xi_i \leq0;\\
%-1,& \xi_i >0.
%\end{cases}
%$$
%And the rest of the proof is similar to Case 1, so we omit it here.
%\end{enumerate}
%\end{enumerate}
Accordingly, $I_p(\cdot,f)$ is a norm on $\mathbb{E}^n$.
\end{proof}

\begin{pro}\label{pronorm-2}
If $\sum_{i=1}^n v_i\ne 0$, then $I_p(v,\cdot)$ defines a norm on $L^p(X)$.
\end{pro}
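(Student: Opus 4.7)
The plan is to verify the three norm axioms for the functional $f\mapsto I_p(v,f)$ on $L^p(X)$: absolute homogeneity, subadditivity, and definiteness. Absolute homogeneity $I_p(v,\lambda f)=|\lambda|I_p(v,f)$ is immediate from pulling $|\lambda|^p$ out of the integral defining $I_p(v,f)^p$, using that $\|\cdot\|$ is a norm on $\mathbb{E}$. Subadditivity $I_p(v,f+g)\le I_p(v,f)+I_p(v,g)$ is already recorded as item (I2) of Lemma \ref{lem:main}; alternatively, it is Minkowski's inequality applied in $L^p(X^n;\mathbb{E})$ to the maps $(x_1,\dots,x_n)\mapsto \sum_{i=1}^n f(x_i)v_i$ and $(x_1,\dots,x_n)\mapsto \sum_{i=1}^n g(x_i)v_i$. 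So the real content is definiteness: one must show that if $I_p(v,f)=0$, then $f=0$ in $L^p(X)$, and this is the only place where the hypothesis $\sum_i v_i\ne 0$ enters.

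Suppose $I_p(v,f)=0$. Then the $\mathbb{E}$-valued map $(x_1,\dots,x_n)\mapsto \sum_{i=1}^n f(x_i)v_i$ vanishes on a set of full product measure in $X^n$. Since $\sum_{i=1}^n v_i\ne 0$, at least one coordinate $v_j$ is nonzero; fix such a $j$. By Fubini, there is a full-measure set of tuples $(x_i)_{i\ne j}$ for which the identity
\[
f(x_j)\,v_j \;=\; -\sum_{i\ne j} f(x_i)\,v_i
\]
holds for $\mu$-a.e.\ $x_j\in X$. Fix one such tuple; then the right-hand side is a vector in $\mathbb{E}$ that does not depend on $x_j$. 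Since the scalar map $\lambda\mapsto \lambda v_j$ is injective (because $v_j\ne 0$), we conclude that $f(x_j)$ equals a single scalar $f_0$ for $\mu$-a.e.\ $x_j$. Hence $f$ is essentially constant, say $f\equiv f_0$, and substituting back into $\sum_{i=1}^n f(x_i)v_i=0$ gives $f_0\sum_{i=1}^n v_i=0$. Since $\sum_{i=1}^n v_i\ne 0$, this forces $f_0=0$, so $f=0$ in $L^p(X)$.

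The only mild obstacle is the careful Fubini bookkeeping needed to pass from the a.e.\ vanishing of $\sum_i f(x_i)v_i$ on $X^n$ to a pointwise statement about $f$ on $X$; the structural content is a direct but simpler analogue of the subspace argument in Proposition \ref{pronorm}, where the hyperplane $M$ is replaced by the single nonzero vector $\sum_{i=1}^n v_i$, ruling out exactly the constant obstruction $f\equiv f_0$.
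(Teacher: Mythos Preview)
Your argument is correct. The paper's own proof is shorter but less self-contained: it first observes that a nonzero $f$ with $I_p(v,f)=0$ must be nonconstant (since $f\equiv c$ would force $c\sum_i v_i=0$, hence $c=0$) and that $v\ne 0$, and then simply invokes the positive-measure construction from Proposition~\ref{pronorm} to conclude $I_p(v,f)\ne 0$, a contradiction. Your route is genuinely different: instead of reducing to that earlier proposition, you use a direct Fubini argument together with the injectivity of $\lambda\mapsto\lambda v_j$ (for some $v_j\ne 0$) to show that $f$ must be essentially constant, and then finish with $f_0\sum_i v_i=0\Rightarrow f_0=0$. Your approach is more elementary and fully self-contained, avoiding the subspace $M$ and the sets $A_\theta$ from Proposition~\ref{pronorm}; the paper's approach is shorter on the page but depends on the more intricate argument already established there.
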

\begin{proof}
Similar to Proposition \ref{pronorm}, it suffices to verify that $I_p(v,f)=0$ implies $f=0$. Suppose the contrary, that there exists a nonzero $f$ satisfying $I_p(v,f)=0$. Then by $\sum_{i=1}^n v_i\ne 0$, one gets that $f$ is nonconstant and $v=(v_1,\cdots,v_n)\ne 0$. Now repeating the process of the proof of Proposition \ref{pronorm} again, %. Since $f$ is nonconstant,
we have $I_p(v,f)\ne 0$, which leads to a contradiction.
\end{proof}

%According to Propositions \ref{pronorm} and \ref{pronorm-2},
Generally speaking, $I_p(v,f)$ is a norm of its first component $v=(v_1,\cdots,v_n)$ in $\mathbb{E}^n$ (by Proposition \ref{pronorm}) and also a norm of its second component $f$ in $L^p(X)$ (by Proposition \ref{pronorm-2}). %(except some special examples). %So we may call $I_p(\cdot,\cdot)$ a bi-norm form.

From this point of view, Theorem \ref{Khint-norm1} %and Theorem \ref{th:main-Khint1}
 compares two norms on $\mathbb{E}^n$, the norm $\|v\|_{p,f}:=I_p(v,f)$ induced by $f$,
and the norm $\|v\|_{l^2(\mathbb{E})}:=\sqrt{\sum^n_{i=1}\|v_i\|^2}$. Moreover, the constants in the estimate are good enough. % We can catch this from
 %which could be also reflected through
 The following application to the lower bound of %estimate of
 Banach-Mazur distance is an evidence for the strong performance of Theorem \ref{Khint-norm1}.
\section{An application to the estimate of  Banach-Mazur distance}
\label{sec:BM}

Given centrally symmetric convex bodies $K$ and $L$ in $\R^n$ with their center at the original point, there are normed spaces $X=(\R^n,\|\cdot\|_X)$ and $Y=(\R^n,\|\cdot\|_Y)$ such that %with the unit balls $K$ and $L$ respectively, i.e.,
$\{x\in\R^n:\|x\|_X\le 1\}= K$ and $\{y\in\R^n:\|y\|_Y\le 1\}= L$. The Banach-Mazur distance of $K$ and $L$ is defined by %usually defined in the multiplicative type:
$$d(K,L)=\inf\{r\ge 1: L\subset T(K)\subset rL,T\in GL(\R^n)\},$$
where $GL(\R^n)$ denotes the general linear group on $\R^n$.  %Equipped with the metric $\log d(\cdot,\cdot)$, the space of isometry classes of $n$-dimensional symmetric bodies (or equivalently, $n$-dimensional normed spaces) becomes a compact metric space, called the Banach-Mazur compactum.
%Note that the condition $L\subset T(K)$ says  the linear operator $T$ must satisfy $\|Tx\|_Y\ge 1$ whenever $ \|x\|_X=1$, i.e., the operator norm $\|T^{-1}\|\le 1$. Here the operator norm is defined to be  $\|T\|:=\sup\limits_{x\ne0}\frac{\|Tx\|_Y}{\|x\|_X}$ and $\|T^{-1}\|:=\sup\limits_{y\ne0}\frac{\|T^{-1}y\|_X}{\|y\|_Y}$. In this setting,
%The Banach-Mazur distance
It can be %also
written as %the well-known form:
\begin{equation}\label{eq:BanachMazur}
d(K,L)=\inf_{T\in GL(\R^n):\|T^{-1}\|\le 1} \|T\|:=\inf_{\|T^{-1}\|\le 1}\sup\limits_{x\ne0}\frac{\|Tx\|_Y}{\|x\|_X}.
\end{equation}

%\vspace{0.2cm}
We are now in a position to deal with Eq.~\eqref{eq:BanachMazur}. For this purpose, we provide the following lemma:

\begin{lemma}\label{lemma:BM} For centrally symmetric convex  bodies $K$ and $L$ in $\R^n$, we have
\begin{equation}\label{eq:BanachMazur-Ext-p}
d(K,L)=\min_{\|T^{-1}\|\le 1}\sup_{p\ge 1}\left(\int_S \|Tx\|_Y^p d\mu(x)\right)^{\frac1p},
\end{equation}
for any set $S$ with $\mathrm{Ext}(K)\subset S\subset K$ and for any probability measure $\mu$ on $S$.

If $\R^n=(\R^k)^m$ (i.e., $n=mk$) and $K=\mathop{\underbrace{\hat{K}\times\cdots\times \hat{K}}}\limits_{m\text{ times}}$ where $\hat{K}$ is a convex body in $\R^k$, then for any $p\ge 1$,
\begin{equation}\label{eq:BanachMazur-Ext-p-1}
d(K,L)\geq \min_{\|T^{-1}\|\le 1}\left(\int_{\mathrm{Ext}(\hat{K})^m} \|Tx\|_Y^p d\mu(x)\right)^{\frac1p}.
\end{equation}
\end{lemma}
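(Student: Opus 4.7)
The key observation is that since $K$ is the unit ball of $\|\cdot\|_X$, the operator norm satisfies $\|T\|_{X\to Y}=\sup_{x\in K}\|Tx\|_Y$, and the map $x\mapsto \|Tx\|_Y$ is a seminorm, hence convex and continuous on the compact set $K$. By Bauer's maximum principle (the maximum of a convex function on a compact convex set is attained at an extreme point), $\sup_{x\in K}\|Tx\|_Y=\sup_{x\in \mathrm{Ext}(K)}\|Tx\|_Y$, and since $\mathrm{Ext}(K)\subset S\subset K$ one may squeeze $S$ in between to get $\sup_{x\in S}\|Tx\|_Y=\|T\|$. This already rewrites the Banach--Mazur distance as $d(K,L)=\min_{\|T^{-1}\|\le 1}\sup_{x\in S}\|Tx\|_Y$.

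To pass from this pointwise supremum to the integral formulation in \eqref{eq:BanachMazur-Ext-p}, I would use that for a probability measure $\mu$ the map $p\mapsto (\int_S \|Tx\|_Y^p\,d\mu)^{1/p}$ is non-decreasing (by Jensen's inequality applied to $t\mapsto t^{q/p}$), and that its supremum over $p\ge 1$ coincides with the essential supremum:
\[
\sup_{p\ge 1}\Bigl(\int_S \|Tx\|_Y^p\,d\mu(x)\Bigr)^{1/p}=\mathop{\mathrm{ess\,sup}}_{\mu}\|Tx\|_Y.
\]
One inequality is automatic, $\mathop{\mathrm{ess\,sup}}_{\mu} \|Tx\|_Y\le \sup_S \|Tx\|_Y=\|T\|$. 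For the reverse, continuity of $\|T\cdot\|_Y$ combined with the condition that $\mathrm{supp}(\mu)$ meets the extremal set $\mathrm{Ext}(K)\subset S$ (which I read as the standing interpretation of ``probability measure on $S$'' here) gives $\mathop{\mathrm{ess\,sup}}_{\mu}\|Tx\|_Y\ge \sup_{\mathrm{Ext}(K)}\|Tx\|_Y=\|T\|$. Combining and minimizing over $T$ with $\|T^{-1}\|\le 1$ yields Eq.~\eqref{eq:BanachMazur-Ext-p}.

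For the product-case inequality \eqref{eq:BanachMazur-Ext-p-1}, I would invoke the elementary fact that extreme points of a product equal the product of extreme points, so $\mathrm{Ext}(\hat K)^m=\mathrm{Ext}(\hat K^m)\subset K$. Integrating the pointwise bound $\|Tx\|_Y\le \|T\|$ (valid on $K$) against any probability measure $\mu$ on $\mathrm{Ext}(\hat K)^m$ gives $(\int_{\mathrm{Ext}(\hat K)^m}\|Tx\|_Y^p\,d\mu)^{1/p}\le \|T\|$; taking the minimum over admissible $T$ and using $d(K,L)=\min_T\|T\|$ produces the claimed lower bound. Since this argument uses only the upper half of the previous paragraph, it imposes no support condition on $\mu$ whatsoever.

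The delicate step is the reverse half of the essential-supremum identification, which is meaningful precisely when the support of $\mu$ catches the extremal directions realising $\|T\|$; the remaining ingredients --- Bauer's maximum principle, monotonicity of $L^p$-norms on probability spaces, and the identification of $\lim_{p\to\infty}\|\cdot\|_{L^p(\mu)}$ with $\|\cdot\|_{L^\infty(\mu)}$ --- are all standard, so the bulk of the proof is really just bookkeeping between these three facts.
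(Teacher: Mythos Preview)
Your argument is essentially the paper's: reduce $\sup_{x\in K}\|Tx\|_Y$ to $\sup_{x\in\mathrm{Ext}(K)}\|Tx\|_Y=\sup_{x\in S}\|Tx\|_Y$ via convexity (Bauer's principle), identify this with $\sup_{p\ge 1}\|\,\|Tx\|_Y\|_{L^p(\mu)}$, and for the product case specialise $S=\mathrm{Ext}(\hat K)^m=\mathrm{Ext}(K)$ and drop the supremum over $p$. You are in fact more careful than the paper about the passage from $\sup_S$ to $\mathrm{ess\,sup}_\mu$, which the paper asserts without comment; note though that ``$\mathrm{supp}(\mu)$ meets $\mathrm{Ext}(K)$'' is not quite sufficient --- one needs $\mathrm{Ext}(K)\subset\mathrm{supp}(\mu)$, and your later phrasing ``catches the extremal directions realising $\|T\|$'' is the accurate one.
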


\begin{proof}
Since $\|x\|_X\le 1$ determines the convex body $K$, and $\|Tx\|_Y$ is a convex function of $x$, the supremum of $\frac{\|Tx\|_Y}{\|x\|_X}$ must arrive on the extreme points of $K$.  Consequently, we arrive at
\begin{equation}\label{eq:BanachMazur-Ext}
d(K,L)=\min_{\|T^{-1}\|\le 1}\max\limits_{x\in \mathrm{Ext}(K)}\|Tx\|_Y,
\end{equation}
in which $\mathrm{Ext}(K)$ is the extreme set of $K$. Again by the convexity of the function $K\ni x\mapsto \|Tx\|_Y$, %$\|Tx\|_Y$ on $x\in K$,
 and $\mathrm{Ext}(K)\subset S\subset \mathrm{conv}(\mathrm{Ext}(K))= K$, we have
$$d(K,L)=\min_{\|T^{-1}\|\le 1}\max\limits_{x\in S}\|Tx\|_Y=\min_{\|T^{-1}\|\le 1}\sup_{p\ge 1}\left(\int_S \|Tx\|_Y^p d\mu(x)\right)^{\frac1p}.$$

If $K=\hat{K}^m$, %we can take
taking
 $S=\mathrm{Ext}(K)=\mathop{\underbrace{\mathrm{Ext}(\hat{K})\times\cdots\times \mathrm{Ext}(\hat{K})}}\limits_{m\text{ times}}=\mathrm{Ext}(\hat{K})^m$ in \eqref{eq:BanachMazur-Ext-p}, and fixing $p\ge 1$, we obtain \eqref{eq:BanachMazur-Ext-p-1}.
%$$
%d(K,L)\geq \min_{\|T^{-1}\|\le 1}\left(\int_{\mathrm{Ext}(\hat{K})^m} \|Tx\|_Y^p d\mu(x)\right)^{\frac1p}.
%$$
\end{proof}

%\noindent \textbf{Main idea}:
%

%\begin{lemma}
%
%\end{lemma}

%\begin{proof}

%There are several ways to give a lower bound by instead `$\max$' of `average'. In fact, for any probability measure $\mu$ on a set $S\supset \mathrm{Ext}(K)$, %(i.e., $\mu(S)=1$) and $p\ge 1$,
%we have
%
%To use the centrally symmetric property of $K$ and $\mathrm{Ext}(K)$, we may always assume $S$ and the possibility measure $\mu$  to be symmetric.
%\end{proof}

%Here and subsequently, we may take the set $S$ as the Cartesian product form $W^m$ for some set $W\subset \R^{\frac nm}$ where $m$ is a divisor of $n$
In order to apply Theorem \ref{Khint-norm1} to Banach-Mazur distance, %our modified Khintchine inequality,
we shall focus on \eqref{eq:BanachMazur-Ext-p-1}. It is also worth noting that the set $S$ appearing in Lemma \ref{lemma:BM} is not necessary to be of Cartesian product form, while more general exploration is still going on.

%Next, we provide two approaches to continue the estimate for Banach-Mazur distance in virtue of our extended Khintchine inequality (Theorem \ref{Khint-norm1}). For convenience, we use $c_{X\hookrightarrow Y}$ to denote the embedding constant $\sup\limits_{x\in X}\frac{\|x\|_Y}{\|x\|_X}$.
\begin{thm}\label{thm:2}
Given the hypercube $K=\{x\in\R^n:\|x\|_\infty\le 1\}$ and the convex symmetric body $L=\{y\in\R^n:\|y\|\le 1\}$ where $\|\cdot\|$ is a norm on $\R^n$, there is $$d(K,L)\geq \sup\limits_{p\ge 1}C_{\|\cdot\|,p}A_p n^{\frac1p-\frac12}$$
where $C_{\|\cdot\|,p}:=\inf\limits_{x\ne 0}\frac{\|x\|}{\|x\|_p}\inf\limits_{y\neq 0}\frac{\|x\|_1}{\|x\|}$ depends on the norms $\|\cdot\|$ and $\|\cdot\|_p$. %\hat{C}_{n,p}$\hat{C}_{n,p} =c_{l_p\rightarrow Y}\cdot A_p\cdot c^2_{l_1\rightarrow Y}$ and $c_{l_p\rightarrow Y},c_{l_1\rightarrow Y} $ are the embedding constants.

If $\|\cdot\|$ is further assumed to be of Hanner cotype $(q,n)$, then
$$
d(K,L)\geq \sup\limits_{p\ge q} A_q \widetilde{C}_{\|\cdot\|,p} \sqrt{n}
$$
where $\widetilde{C}_{\|\cdot\|,p}:=\inf\limits_{x\ne0}\frac{\|x\|}{\|x\|_p}\inf\limits_{x\ne0}\frac{\|x\|_p}{\|x\|}$.
\end{thm}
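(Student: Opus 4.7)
The plan is to apply Lemma \ref{lemma:BM} to reduce $d(K, L)$ to a Khintchine-type quantity, then invoke either the classical Khintchine inequality (for the first bound) or Theorem \ref{Khint-norm1}(1) (for the second). Since $K = \hat{K}^n$ with $\hat{K} = [-1,1]$ and $\mathrm{Ext}(\hat{K}) = \{-1,1\}$, equipping $\{-1,1\}^n$ with the uniform probability measure and applying \eqref{eq:BanachMazur-Ext-p-1} gives
\begin{equation*}
d(K, L) \ge \min_{\|T^{-1}\|\le 1} I_p(v, f),
\end{equation*}
where $v_i = Te_i \in \R^n$, $X = \{-1, 1\}$ with $\mu(\pm 1) = 1/2$, and $f(\pm 1) = \pm 1$. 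The constraint $\|T^{-1}\| \le 1$ is equivalent to $\|Tx\| \ge \|x\|_\infty$ for all $x$, so $\|v_i\| \ge 1$ for every $i$; this will be the only way the constraint enters.

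For the first bound, I would first use $\|y\| \ge \alpha\|y\|_p$ with $\alpha := \inf_{x\ne 0}\|x\|/\|x\|_p$ to reduce to the $\ell^p$-norm. Expanding $\|T\epsilon\|_p^p$ coordinate-wise and applying the classical (real-valued) Khintchine inequality on each coordinate yields $I_p(v, f) \ge \alpha A_p (\sum_j a_j^p)^{1/p}$, where $a_j := (\sum_i v_{i,j}^2)^{1/2}$ is the $\ell^2$-norm of the $j$-th row of $T$. The factor $n^{1/p - 1/2}$ is then extracted in three elementary steps: Cauchy--Schwarz gives $a_j \ge n^{-1/2}b_j$ with $b_j := \sum_i |v_{i,j}|$; the power-mean inequality (valid for $p \ge 1$) gives $(\sum_j b_j^p)^{1/p} \ge n^{1/p - 1}\sum_j b_j$; and $\sum_j b_j = \sum_i \|v_i\|_1 \ge \beta n$, where $\beta := \inf_{x\ne 0}\|x\|_1/\|x\|$ and we use $\|v_i\| \ge 1$. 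Multiplying these together produces $I_p(v, f) \ge \alpha\beta A_p n^{1/p - 1/2} = C_{\|\cdot\|, p} A_p n^{1/p - 1/2}$.

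For the second bound I would apply Theorem \ref{Khint-norm1}(1) directly to $\mathbb{E} = (\R^n, \|\cdot\|)$, now assumed to be of Hanner cotype $(q, n)$ with $q \le p$. Taking $S = \{1\}$ in the definition of $c_{f, p, q}$ makes $\mu(S) = 1/2$, so both $(2\mu(S))^{1/p - 1}$ and $(2\mu(S))^{-1/2}$ equal $1$ and $\|f|_{S\cup(-S)}\|_1 = 1$; hence $c_{f, p, q} = A_q$. Using $\|v_i\| \ge 1$,
\begin{equation*}
I_p(v, f) \ge A_q \left(\sum_i \|v_i\|^2\right)^{1/2} \ge A_q\sqrt{n} \ge A_q \tilde{C}_{\|\cdot\|, p}\sqrt{n},
\end{equation*}
where the last inequality uses that $\tilde{C}_{\|\cdot\|, p}$, being a product of two infima of norm ratios, is at most $1$.

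The main difficulty will be the first bound in the regime $1 \le p < 2$: the more natural Jensen-based argument (applied to $t \mapsto t^{p/2}$) produces the factor $n^{1/p - 1/2}$ only for $p \ge 2$ and goes in the wrong direction when $p < 2$. The detour through the $\ell^1$ column sums $b_j$, combined with Cauchy--Schwarz and the power-mean inequality, is precisely what restores the correct $n$-exponent uniformly across all $p \ge 1$.
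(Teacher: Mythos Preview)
Your proposal is correct, and both parts reach the stated bounds, but by different mechanisms than the paper.

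\medskip
\textbf{First bound.} Up to the point $\bigl(\int_S\|Tx\|^p\,dx\bigr)^{1/p}\ge \alpha A_p\bigl(\sum_j a_j^p\bigr)^{1/p}$ with $a_j=\|\text{row}_j(T)\|_2$, you and the paper agree. After that the paper takes an AM--GM/Hadamard route: $(\sum_j a_j^p)^{1/p}\ge n^{1/p}\sqrt[n]{\prod_j a_j}\ge n^{1/p}\sqrt[n]{|\det T|}$, and then bounds $|\det T^{-1}|$ via the entrywise estimate $|\hat t_{ij}|\le \tilde c$ coming from $\|T^{-1}y\|_\infty\le \tilde c\|y\|_1$, together with Hadamard again, to obtain $|\det T^{-1}|\le(\tilde c\sqrt n)^n$. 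Your alternative---Cauchy--Schwarz $a_j\ge n^{-1/2}b_j$, power-mean $(\sum_j b_j^p)^{1/p}\ge n^{1/p-1}\sum_j b_j$, and $\sum_j b_j=\sum_i\|v_i\|_1\ge\beta n$ using $\|v_i\|\ge1$---is more elementary: it avoids determinants and Hadamard entirely, and it uses the constraint $\|T^{-1}\|\le1$ only through the single consequence $\|Te_i\|\ge1$. The paper's route, on the other hand, extracts more global information from $T^{-1}$; both land on the same numerical factor $C_{\|\cdot\|,p}A_p n^{1/p-1/2}$.

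\medskip
\textbf{Second bound.} Here you are actually sharper than the paper's argument. After invoking Theorem~\ref{Khint-norm1}(1) to get $I_p(v,f)\ge A_q\bigl(\sum_i\|v_i\|^2\bigr)^{1/2}$, the paper runs a duality computation (using $T^{-1}T=I$, H\"older, and $\|\hat{\vec a}_i\|_{p^*}\le\tilde c$) to conclude $\|\vec b_i\|_p\ge\tilde c^{-1}$ and hence $\|\vec b_i\|\ge\hat c\tilde c^{-1}=\widetilde C_{\|\cdot\|,p}$. Your observation that the constraint already gives $\|v_i\|=\|Te_i\|\ge\|e_i\|_\infty=1$ short-circuits this and yields the clean bound $d(K,L)\ge A_q\sqrt n$ directly; the factor $\widetilde C_{\|\cdot\|,p}\le1$ is then inserted only to match the stated form. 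So your argument is both simpler and shows that the constant $\widetilde C_{\|\cdot\|,p}$ in the statement is superfluous.
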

\begin{proof}Let $T=(t_{ij})_{n\times n}$ with $t_{ij}\in\R$. Denote by $S=\mathrm{Ext}(K)$ and $a_i=(t_{i1},\cdots,t_{in})$, $i=1,\cdots,n$.
%Let $T$ be written as a matrix $(t_{ij})=(a_1,\cdots,a_n)^*$ with $a_i=(t_{i1},\cdots,t_{in})$.
Then $Tx=(a_1\cdot x,\cdots,a_n\cdot x)^\top$ and
 $\int_S \|Tx\|^p dx=\int_S \|(a_1\cdot x,\cdots,a_n\cdot x)\|^p dx$.
We note that for given $p\ge 1$ and $c:=\inf\limits_{x\ne 0}\frac{\|x\|}{\|x\|_p}$,
\begin{align*}
\left(\int_S \|Tx\|^p dx\right)^{\frac1p}&=\left(\int_S \|(a_1\cdot x,\cdots,a_n\cdot x)\|^p dx\right)^{\frac1p}
\\&\ge c \left(\int_S \|(a_1\cdot x,\cdots,a_n\cdot x)\|_p^p dx\right)^{\frac1p}
\\&= c\left(\int_S\sum_{i=1}^n|\langle a_i,x\rangle|^p dx\right)^{\frac1p}
%\\&
= c\left(\sum_{i=1}^n\int_S|\langle a_i,x\rangle|^p dx\right)^{\frac1p}
\\\text{(by Theorem \ref{Khint-norm1})}\; %Khintchine inequality
&\ge c \left(\sum_{i=1}^n  (A_p\|a_i\|_2)^p \right)^{\frac1p}
\\&\ge cA_p n^{\frac1p} \sqrt[n]{\Pi_{i=1}^n  \|a_i\|_2}
\\&\ge cA_p  n^{\frac1p} \sqrt[n]{|\det(T)|} = cA_p  n^{\frac1p} |\det(T^{-1})|^{-\frac{1}{n}}
\\&\ge cA_p\tilde{c}^{-1} n^{\frac1p-\frac12}
\end{align*}
where we use the fact that $\|T^{-1}\|\le 1$ $\Rightarrow$ $\|T^{-1}x\|_\infty\le \|x\|$ $\Rightarrow$ $\|T^{-1}x\|_\infty\le \tilde{c}\|x\|_1$ $\Rightarrow$ $|\det(T^{-1})|\le (\tilde{c}\sqrt{n})^{n}$ with %for some constant
$\tilde{c}=\sup\limits_{y\neq 0}\frac{\|y\|}{\|y\|_1}$. %$\tilde{c}_n>0$.%$|\det(T^{-1})|\le (\tilde{c}_nn)^{n/2}$ for some constant $\tilde{c}_n>0$. %by determinant inequalities in matrix theory.
%Actually, $\tilde{c}_n$ can be chosen as $\sup_{y\neq 0}\frac{\|y\|_Y}{\|y\|_1}$. %$(\sup_{x\neq 0}\frac{\|x\|_\infty}{\|x\|_X}\sup_{y\neq 0}\frac{\|y\|_Y}{\|y\|_1})^2$.\hat{C}_{n,p}
So $d(K,L)\geq\frac{\inf\limits_{x\ne 0}\frac{\|x\|}{\|x\|_p}}{\sup\limits_{y\neq 0}\frac{\|y\|}{\|y\|_1}} A_p n^{\frac1p-\frac12}$. %Here, $c=c_{l_p\rightarrow Y}$ and $\tilde{c}_n$ could be  taken as $c^2_{X\rightarrow l_\infty}\cdot c^2_{l_1\rightarrow Y}$.

%Let $T$ be written as a matrix in form $(b_1,\cdots,b_n)$ with
Let $\vec b_i=(t_{1i},\cdots,t_{ni})^\top$. Then
$Tx=\sum_{i=1}^n x_i \vec b_i$. %and $(\int_S \|Tx\|^p dx)^{\frac1p}=(\int_S \|\sum_{i=1}^n x_i b_i\|_Y^p dx)^{\frac1p}$.

By Theorem \ref{Khint-norm1}, %(1), %According to the Khintchine inequality in the present paper,
since $\|\cdot\|$ is of Hanner cotype $(q,n)$ with $q\le p$, we have
\begin{equation}\label{metric}
(\int_S \|Tx\|^p dx)^{\frac1p}=\left(\int_S \|\sum_{i=1}^n x_i\vec  b_i\|^p dx\right)^{\frac1p}
\ge A_q\sqrt{\sum_{i=1}^n\|\vec b_i\|^2}.
\end{equation}
%where the constant $c$ depends on norms $\|\cdot\|_X$ and $\|\cdot\|_Y$.

Suppose $T^{-1}=(\widehat{t}_{ij})$. %and $T=(t_{ij})$.
Since $T^{-1}T=I$, we have $$\|\hat{\vec a}_i\|_{p^*}\|\vec b_i\|_p:=(\sum_{j=1}^n |\hat{t}_{ij}|^{p^*})^{\frac{1}{p^*}}(\sum_{j=1}^n |t_{ji}|^{p})^{\frac1p}\ge \sum^n_{j=1}\widehat{t}_{ij}t_{ji}=1,$$ where $\frac1p+\frac{1}{p^*}=1$.
%$$
%\max_{i,j}|\widehat{t}_{ij}|\sum^n_{j=1}|t_{ji}|\geq \sum^n_{j=1}\widehat{t}_{ij}t_{ji}=1
%$$
%which implies that $\|b_i\|_1=\sum^n_{j=1}|t_{ji}|\geq \frac 1M$, $\|\hat{b}_i\|_{p^*}\|b_i\|_p\ge 1$
%where
Accordingly, the constraint $\|T^{-1}\|\le 1$ implies
$$\|\hat{\vec a}_i\|_{p^*}=\sup_{y\neq 0}\frac{\langle \hat{\vec a}_i,\vec y\rangle}{\|y\|_p} \le \sup_{y\neq 0}\frac{\max\limits_{1\le i\le n}|\langle \hat{\vec a}_i,\vec y\rangle|}{\|y\|_p} =\sup_{y\neq 0}\frac{\|T^{-1}y\|_\infty}{\|y\|_p}\leq\tilde{c}\|T^{-1}\|\le\tilde{c} $$
and then $\|\vec b_i\|_p\ge \tilde{c}^{-1}$, where $\tilde{c}=
\sup\limits_{x\ne0}\frac{\|x\|}{\|x\|_p}$.
%$$
%M=\max_{i,j}|\widehat{t}_{ij}|\leq\sup_{y\neq 0}\frac{\|T^{-1}y\|_\infty}{\|y\|_1}
%\leq\sup_{x\neq0}\frac{\|x\|_\infty}{\|x\|_X}\cdot\|T^{-1}\|\cdot \sup_{y\neq0}\frac{\|y\|_Y}{\|y\|_1}\leq \sup_{x\neq0}\frac{\|x\|_\infty}{\|x\|_X}\cdot \sup_{y\neq0}\frac{\|y\|_Y}{\|y\|_1}.
%$$
Thus %combining the equivalence of norms,
$$
\left(\int_S \|\sum_{i=1}^n x_i \vec b_i\|^p dx\right)^{\frac1p}
\ge A_q\sqrt{\sum_{i=1}^n\|\vec b_i\|^2}
\ge A_q\widehat{c}\sqrt{\sum_{i=1}^n\|\vec b_i\|_p^2}
 \ge A_q \widehat{c}\tilde{c}^{-1} \sqrt{n}.
$$
where $\widehat{c}=\inf\limits_{x\ne0}\frac{\|x\|}{\|x\|_p}$.
%$$\inf\limits_{x\ne0}\frac{\|x\|_X}{\|x\|_\infty}\inf\limits_{x\ne0}\frac{\|x\|_p}{\|x\|_Y}\inf\limits_{x\ne0}\frac{\|x\|_Y}{\|x\|_p}$$
Consequently, $d(K,L)\geq A_q\inf\limits_{x\ne0}\frac{\|x\|}{\|x\|_p}\inf\limits_{x\ne0}\frac{\|x\|_p}{\|x\|} \sqrt{n}$.
\end{proof}

\begin{pro}\label{pro:lp-general}
The Banach-Mazur distance $d(l^p,\|\cdot\|)$ of a norm $\|\cdot\|$ and the $l^p$-norm on $\R^n$ has the lower bound:
$$d(l^p,\|\cdot\|)\ge \begin{cases} \sup\limits_{r\ge 1}C_{\|\cdot\|,r}A_{r} n^{\frac{1}{r}-\frac12-\frac1p},& \text{if }p\ge 2,\\
\sup\limits_{r\ge 1}C_{\|\cdot\|_*,r}A_{r} n^{\frac{1}{r}+\frac1p-\frac32} ,& \text{if }1\le p\le 2,
\\
%\end{cases}$$
%and
%$$d(l^p,\|\cdot\|)\ge \begin{cases}
\sup\limits_{r\ge q} A_q \widetilde{C}_{\|\cdot\|,r} n^{\frac12-\frac1p},& \text{if }p\ge 2\text{ and }\|\cdot\|\in \mathcal{H}_{q,n}^{\mathrm{cotype}}(\R),\\ %\text{ is Hanner cotype }(q,n)
\sup\limits_{r\ge q} A_q \widetilde{C}_{\|\cdot\|_*,r}n^{\frac1p-\frac12},& \text{if }1\le p\le 2\text{ and }\|\cdot\|_*\in \mathcal{H}_{q,n}^{\mathrm{cotype}}(\R), \end{cases}$$ %\text{ is Hanner cotype }(q,n)
%If $\|\cdot\|$ is further assumed to be of Hanner cotype $(q,n)$, then
%$$
%d(K,L)\geq \sup\limits_{p\ge q} A_q C_{\|\cdot\|,p}' \sqrt{n}.
%$$
where the constant $C$ and $\widetilde{C}$ %depending on two norms
are defined in Theorem \ref{thm:2}, and $\|\cdot\|_*$ is the dual norm of $\|\cdot\|$. Here for a linear space $\mathbb{E}$, we use $\mathcal{H}_{q,n}^{\mathrm{cotype}}(\mathbb{E})$ to denote the collection of all norms on $\mathbb{E}$ that is of Hanner cotype $(q,n)$. %(resp.,  $\mathcal{H}_{q,n}^{\mathrm{type}}(\mathbb{E})$) (resp., type)
\end{pro}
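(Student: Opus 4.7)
The plan is to reduce all four cases to Theorem \ref{thm:2}, which already covers the case $l^\infty$ versus a general norm. Two elementary tools suffice for the reduction: the multiplicative triangle inequality $d(K,M)\le d(K,L)\,d(L,M)$ for Banach-Mazur distance, and the self-duality $d(K,L)=d(K^\circ,L^\circ)$ under polar bodies. No further use of the Khintchine machinery of Section \ref{sec:K-H} is needed.

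For the case $p\ge 2$, I first record the elementary bound $d(l^\infty,l^p)\le n^{1/p}$: the identity map sends $B_{l^\infty}$ into $n^{1/p}B_{l^p}$ (since $\|x\|_\infty\le 1$ implies $\|x\|_p\le n^{1/p}$) and contains $B_{l^p}$ (since $\|x\|_p\le 1$ implies $\|x\|_\infty\le 1$). Applying the triangle inequality to the triple $(l^\infty,l^p,\|\cdot\|)$ yields
\[
d(l^p,\|\cdot\|)\;\ge\;\frac{d(l^\infty,\|\cdot\|)}{d(l^\infty,l^p)}\;\ge\;n^{-1/p}\,d(l^\infty,\|\cdot\|).
\]
Substituting the two lower bounds supplied by Theorem \ref{thm:2} for $d(l^\infty,\|\cdot\|)$, namely $\sup_{r\ge 1}C_{\|\cdot\|,r}A_r n^{1/r-1/2}$ in general and $\sup_{r\ge q}A_q\widetilde{C}_{\|\cdot\|,r}\sqrt n$ under Hanner cotype, produces precisely the first and third cases of the proposition.

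For $1\le p\le 2$, I pass to the dual. Since polarity gives $(B_{l^p})^\circ=B_{l^{p^*}}$ and $(B_{\|\cdot\|})^\circ=B_{\|\cdot\|_*}$ with $p^*=\frac{p}{p-1}\ge 2$, we have $d(l^p,\|\cdot\|)=d(l^{p^*},\|\cdot\|_*)$. Applying the two bounds just established (with $p$ replaced by $p^*$ and $\|\cdot\|$ replaced by $\|\cdot\|_*$), and using $\frac{1}{p^*}=1-\frac{1}{p}$ to convert the exponent $\frac1r-\frac12-\frac{1}{p^*}$ into $\frac1r+\frac1p-\frac32$, and $\frac12-\frac{1}{p^*}$ into $\frac1p-\frac12$, gives the second and fourth cases.

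The main task is really just the bookkeeping of exponents together with the two background facts. The triangle inequality is immediate from the definition: if $L\subset T_1(K)\subset d_1 L$ and $M\subset T_2(L)\subset d_2 M$ then $M\subset T_2T_1(K)\subset d_1 d_2 M$. Polarity invariance is equally standard: $L\subset T(K)\subset rL$ is equivalent to $L^\circ\subset r(T^*)^{-1}(K^\circ)\subset rL^\circ$, so both distances agree. Because both ingredients are classical, I do not anticipate any real obstacle; the content of the proposition lies entirely in the sharp constants and powers of $n$ that Theorem \ref{thm:2} has already provided.
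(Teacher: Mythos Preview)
Your proposal is correct and follows essentially the same route as the paper: reduce to Theorem \ref{thm:2} via the multiplicative triangle inequality and duality of Banach--Mazur distance. The only cosmetic difference is that for $1\le p\le 2$ the paper goes through $l^1$ (using $d(l^1,l^p)=n^{1-1/p}$ and $d(l^1,\|\cdot\|)=d(l^\infty,\|\cdot\|_*)$) whereas you dualize first to $d(l^{p^*},\|\cdot\|_*)$ and then apply the $p\ge 2$ case; both paths yield the identical factor $n^{1/p-1}d(l^\infty,\|\cdot\|_*)$.
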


\begin{proof}
We will use the following basic facts:
\begin{itemize}
\item $d(l^\infty,l^q)=n^{\frac1q}$ for $q\ge 2$, $d(l^1,l^p)=n^{1-\frac1p}$ for $1\le p\le 2$;
\item $d(\|\cdot\|_1,\|\cdot\|)=d(\|\cdot\|_\infty,\|\cdot\|_*)$ where $\|\cdot\|_*$ is the dual norm of $\|\cdot\|$;
\item $d(K,L)d(L,M)\ge d(K,M)$ for any centrally symmetric convex bodies $K,L$ and $M$ in $\R^n$.
\end{itemize}
Together with the above facts,  we immediately obtain that for $p\ge 2$,
$$d(l^p,\|\cdot\|)\ge \frac{d(l^\infty,\|\cdot\|)}{d(l^\infty,l^p)}=n^{-\frac1p}d(l^\infty,\|\cdot\|),$$
and for $1\le p\le 2$,
$$d(l^p,\|\cdot\|)\ge \frac{d(l^1,\|\cdot\|)}{d(l^1,l^p)}=n^{\frac1p-1}d(l^\infty,\|\cdot\|_*).$$
Therefore, the proof is completed by employing Theorem \ref{thm:2}. %, we derive the statement
\end{proof}

Applying Proposition \ref{pro:lp-general} to $l^p$ and $l^q$ norms, we get

\begin{corollary}\label{cor:lplq-BM}
Given $K=\{x\in\R^n:\|x\|_p\le 1\}$ and $L=\{y\in\R^n:\|y\|_q\le 1\}$ with $1\le p<2<q\le \infty$, we have $d(K,L)\ge \max\{A_pn^{\frac12-\frac1q},A_{q^*}n^{\frac1p-\frac12}\}$, where $q^*$ is the H\"older conjugate of $q$ (i.e., $\frac1q+\frac{1}{q^*}=1$).
\end{corollary}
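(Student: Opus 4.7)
The plan is to apply Proposition \ref{pro:lp-general} in both orderings of the pair $(l^p,l^q)$ and combine the resulting lower bounds by taking their maximum; each ordering contributes one of the two terms in the stated inequality.

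First, I would use the fourth clause of Proposition \ref{pro:lp-general}, valid for $1\le p\le 2$ when the dual of the target norm belongs to $\mathcal{H}_{s,n}^{\mathrm{cotype}}(\R)$ for some $s$. Specialising the target to $\|\cdot\|_q$, its dual is $\|\cdot\|_{q^*}$, and the remark after Definition \ref{defn} records that $L^{q^*}$ is of Hanner cotype $q^*$ since $q^*\in[1,2]$; hence the hypothesis is satisfied with cotype parameter $q^*$. I would then exploit the freedom in the supremum by setting $r=q^*$, so that both infima defining $\widetilde{C}_{\|\cdot\|_{q^*},q^*}$ collapse to the infimum of the constant function $1$. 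This produces the first claimed bound $d(l^p,l^q)\ge A_{q^*}\,n^{1/p-1/2}$.

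For the companion bound, I would invoke the symmetry $d(K,L)=d(L,K)$, which is immediate from the definition by replacing $T$ with $T^{-1}$. Since $q\ge 2$, I can now apply the third clause of Proposition \ref{pro:lp-general} to $d(l^q,l^p)$ with target norm $\|\cdot\|_p$; as $p\in[1,2]$, the space $l^p$ is itself of Hanner cotype $p$. Choosing $r=p$ once more reduces $\widetilde{C}_{\|\cdot\|_p,p}$ to $1$, yielding $d(l^p,l^q)=d(l^q,l^p)\ge A_p\,n^{1/2-1/q}$. Taking the maximum of the two estimates proves the corollary. I do not anticipate a substantive obstacle here: once the right clause of Proposition \ref{pro:lp-general} is identified for each ordering, the argument reduces to the elementary observation that setting the auxiliary parameter $r$ equal to the cotype index of $l^s$ trivialises the constant $\widetilde{C}$ to $1$.
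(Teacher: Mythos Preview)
Your argument is correct and matches the paper's intended derivation: the paper states the corollary as an immediate application of Proposition~\ref{pro:lp-general}, and your choice of the fourth clause (with $r=q^*$) together with the third clause applied to $d(l^q,l^p)$ (with $r=p$) is exactly how those two bounds arise. The only step you add beyond the paper's one-line justification is the explicit invocation of the symmetry $d(K,L)=d(L,K)$ and the observation that $\widetilde{C}_{\|\cdot\|_s,s}=1$, both of which are routine.
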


%This approach is suitable for $\|\cdot\|_Y=\|\cdot\|_p$, especially $\|\cdot\|_Y=\|\cdot\|_1$.

\begin{example}
%For the case of %$\|\cdot\|_X=\|\cdot\|_\infty$ and
%$\|\cdot\|=\|\cdot\|_1$,
% Eq.~\eqref{eq:BanachMazur-Ext} turns to be $$d(K,L)=\min\limits_{\|T^{-1}\|\le1}\max\limits_{v\in\{-1,1\}^n}\|Tv\|_1.$$
%In this case, $S=\{-1,1\}^n$.
%%the constant $\tilde{C}_n$ can be determined by $(\sup\limits_{\|x\|_X=1}\|x\|_\infty\cdot \sup\limits_{\|y\|_1=1}\|y\|_Y)^2=1$.
% Taking $p=1$ in Theorem \ref{thm:2},
 Taking $p=1$ and $q=\infty$ in Corollary \ref{cor:lplq-BM}, we have %By computation, % $c=1,\hat{c}=\frac{1}{\sqrt{2}},C=c\hat{c},\widehat{C}_n=\frac{C}{\widetilde{C}_n}=C$
 %and thus
$d(K,L)\ge \sqrt{\frac{n}{2}}$. This confirms a surmise %numerical experiment
on the Banach-Mazur distance between the $n$-dimensional cube and the crosspolytope \cite{FeiXue18}.

\end{example}

%\begin{thm}\label{thm:3}
%Given $K=\{x\in\R^n:\|x\|_p\le 1\}$, $L=\{y\in\R^n:\|y\|_q\le 1\}$, where $1\le p<2<q\le \infty$, then $d(K,L)\ge \max\{A_pn^{\frac12-\frac1q},A_{q^*}n^{\frac1p-\frac12}\}$.
%\end{thm}

%Note that $n^{\frac1q-\frac 1p}\|x\|_p\le \|x\|_q\le \|x\|_p\le n^{\frac1p-\frac 1q}\|x\|_q$ for $1\le p\le q\le \infty$. We have $d(l^p,l^q)\ge \frac{1}{\sqrt{2}} n^{|\frac1p-\frac1q|-\frac12} $ because $n^{-\frac1p}n^{\frac1q-1}\sqrt{n}=n^{\frac1q-\frac1p-\frac12}$

%$d_{1,\infty}\approx\sqrt{n}$, $d_{p,2}= n^{|\frac12-\frac1p|}$, $d_{p,q}=n^{|\frac1q-\frac1p|}$ if $p,q$ lie in the same side of $2$, $d_{p,q}\approx n^{\max\{|\frac12-\frac1p|,|\frac12-\frac1q|\}}$.

%$$
%d(l^p,l^q) ~\; \begin{cases} = n^{|\frac1q-\frac1p|} & \text{ if } 1\le p,q\le 2 \text{ or } 2\le p,q\le \infty \\
%\approx n^{\max\{|\frac12-\frac1p|,|\frac12-\frac1q|\}} & \text{ if } 1\le p<2\le q\le\infty
%\end{cases}
%$$

%$d_{p,q}\le d_{p,2}d_{q,2}\le  n^{|\frac12-\frac1p|+|\frac12-\frac1q|}$

%$d_{\infty,p}\ge d_{\infty,2}/d_{p,2}=n^{\min\{\frac1p,1-\frac1p\}}$

%$d_{\infty,p}\ge n^{\frac1p-1+\frac12}=n^{\frac1p-\frac12}$ if $1\le p\le 2$

\subsection*{Acknowledgements}
%{\bf Acknowledgements.}
The authors are %Dong Zhang and Xin Luo
 supported by grant from the Project funded by China Postdoctoral Science Foundation (No. 2019M660829).  %We thank the referee for his/her valuable comments and suggestions.
We thank the %anonymous
referee for detailed suggestions on the former version of the paper, which improves the  presentation of the paper. Xin Luo appreciated the hospitality when she visited the Max Planck Institute for Mathematics in the Sciences in the winter of 2019.
%If you'd like to thank anyone, place your comments here and remove the percent signs.

\section*{Appendix }
\begin{proof}[Proof of Lemma \ref{lemma:P(n,k)}]
We first prove that if $0\le \alpha\le1$, then
\begin{equation}\label{eq:P(n,k)}
\sum_{1\le i_1<\cdots<i_k\le n}\left(\sum_{j=1}^k x_{i_j}\right)^\alpha \ge
{n-1\choose k-1}\left(\sum_{i=1}^n x_i\right)^\alpha.
\end{equation}
%We use $P(n,k)$ to denote the inequality \eqref{eq:P(n,k)}.
Denote by $P(n,k)$ the inequality \eqref{eq:P(n,k)}.
Obviously, $P(n,n)$ always holds for $n\in \mathbb{Z}_+$. Since $\sum_{i=1}^n x_i^\alpha\ge \left(\sum_{i=1}^n x_i\right)^\alpha$ holds for any $n$, $P(n,1)$ is true for any $n\in \mathbb{Z}_+$. Now, we show that $P(n-1,k-1)$ and $P(n-1,k)$ imply $P(n,k)$.

Fixing $x_1,\cdots, x_{n-1}\ge0$, let
$$\beta(x_n)=\sum_{1\le i_1<\cdots<i_k\le n}\left(\sum_{j=1}^k x_{i_j}\right)^\alpha - {n-1\choose k-1}\left(\sum_{i=1}^n x_i\right)^\alpha.$$
Then, %elementary calculation gives
\begin{align*}
\beta'(x_n)&=\alpha \sum_{1\le i_1<\cdots<i_{k-1}\le n-1,i_k=n}\left(\sum_{j=1}^k x_{i_j}\right)^{\alpha-1}-\alpha {n-1\choose k-1} \left(\sum_{i=1}^n x_i\right)^{\alpha-1}
\\& = \alpha\sum_{1\le i_1<\cdots<i_{k-1}\le n-1,i_k=n}\left(\left(\sum_{j=1}^k x_{i_j}\right)^{\alpha-1}-\left(\sum_{i=1}^n x_i\right)^{\alpha-1}\right)
\ge 0.
\end{align*}
 By inequalities $P(n-1,k-1)$ and $P(n-1,k)$, we get
\begin{align*}
\beta(0)&=\sum_{1\le i_1<\cdots<i_k\le n-1}\left(\sum_{j=1}^k x_{i_j}\right)^\alpha + \sum_{1\le i_1<\cdots<i_{k-1}\le n-1}\left(\sum_{j=1}^{k-1} x_{i_j}\right)^\alpha - {n-1\choose k-1}\left(\sum_{i=1}^{n-1} x_i\right)^\alpha
\\&\ge {n-2\choose k-1} \left(\sum_{i=1}^{n-1} x_i\right)^\alpha+ {n-2\choose k-2} \left(\sum_{i=1}^{n-1} x_i\right)^\alpha - {n-1\choose k-1}\left(\sum_{i=1}^{n-1} x_i\right)^\alpha =0
\end{align*}
according to the basic equality ${n-1\choose k-1}={n-2\choose k-1}+ {n-2\choose k-2}$. %So, to prove $\beta(x_n)\ge 0$, we only need to verify $\beta(0)\ge0$. In fact,
Thus, $\beta(x_n)\ge 0$, i.e., $P(n,k)$ is true.
Therefore, by mathematical induction, $P(n,k)$  holds for all $1\le k\le n$, $n=1,2\cdots$.

Similarly, %by the same process above,
if $\alpha\ge 1$, then there holds
\begin{equation}\label{eq:P(n,k)'}
\sum_{1\le i_1<\cdots<i_k\le n}\left(\sum_{j=1}^k x_{i_j}\right)^\alpha \le
{n-1\choose k-1}\left(\sum_{i=1}^n x_i\right)^\alpha.
\end{equation}

The power mean inequality
$$
%\left(\frac{1}{{n\choose k}}\sum_{1\le i_1<\cdots<i_k\le n} \left(\sum_{j=1}^k x_{i_j}\right)^{\alpha_1}\right)^{1/\alpha_1}\le
\frac{1}{{n\choose k}}\sum_{1\le i_1<\cdots<i_k\le n} \sum_{j=1}^k x_{i_j}
\le \left(\frac{1}{{n\choose k}}\sum_{1\le i_1<\cdots<i_k\le n} \left(\sum_{j=1}^k x_{i_j}\right)^{\alpha_2}\right)^{1/\alpha_2}
$$
implies
\begin{align}\label{eq:alpha2}
\sum_{1\le i_1<\cdots<i_k\le n} \left(\sum_{j=1}^k x_{i_j}\right)^{\alpha_2}
&\ge {n\choose k}^{1-\alpha_2}\left(\sum_{1\le i_1<\cdots<i_k\le n} \sum_{j=1}^k x_{i_j}\right)^{\alpha_2} \notag
\\&= {n\choose k}^{1-\alpha_2}\left(\frac kn{n\choose k}\sum_{i=1}^n x_i\right)^{\alpha_2} \notag
\\&= {n\choose k}\left(\frac kn\right)^{\alpha_2}\left(\sum_{i=1}^n x_i\right)^{\alpha_2}
\end{align}
and similarly
\begin{equation}\label{eq:alpha1}
\sum_{1\le i_1<\cdots<i_k\le n} \left(\sum_{j=1}^k x_{i_j}\right)^{\alpha_1}\le {n\choose k}\left(\frac kn\right)^{\alpha_1}\left(\sum_{i=1}^n x_i\right)^{\alpha_1}
\end{equation}
where $0\le \alpha_1\le 1\le \alpha_2$. Together with inequalities \eqref{eq:P(n,k)}, \eqref{eq:P(n,k)'}, \eqref{eq:alpha2}, \eqref{eq:alpha1}, and the basic fact ${n\choose k}\frac kn={n-1\choose k-1}$, we complete the proof of \eqref{eq:lemma-main}.

Taking $(x_1,x_2,\cdots,x_n)=(1,0,\cdots,0)$, we have $$\sum_{1\le i_1<\cdots<i_k\le n}\left(\sum_{j=1}^k x_{i_j}\right)^\alpha / {n\choose k}\left(\sum_{i=1}^n x_i\right)^\alpha=\frac kn.$$
 Taking $(x_1,x_2,\cdots,x_n)=(1,1,\cdots,1)$, we obtain
 $$\sum_{1\le i_1<\cdots<i_k\le n}\left(\sum_{j=1}^k x_{i_j}\right)^\alpha / {n\choose k}\left(\sum_{i=1}^n x_i\right)^\alpha=(\frac kn)^\alpha.$$ Thus, the bounds are sharp.%the best.
\end{proof}

% Authors must disclose all relationships or interests that
% could have direct or potential influence or impart bias on
% the work:
%
% \section*{Conflict of interest}
%
% The authors declare that they have no conflict of interest.

% BibTeX users please use one of
%\bibliographystyle{spbasic}      % basic style, author-year citations
%\bibliographystyle{spmpsci}      % mathematics and physical sciences
%\bibliographystyle{spphys}       % APS-like style for physics
%\bibliography{}   % name your BibTeX data base

% Non-BibTeX users please use

\end{document}